\theoremstyle{Theorem A}
\theoremstyle{Theorem B}
\theoremstyle{Theorem C}
\theoremstyle{Theorem D}
\theoremstyle{Theorem E}
\newtheorem*{thmA}{Theorem A}
\numberwithin{equation}{section}
\numberwithin{figure}{section}
\theoremstyle{plain}
\newtheorem*{cor*}{\protect\corollaryname}
\theoremstyle{plain}
\newtheorem{thm}{\protect\theoremname}[section]
\theoremstyle{definition}
\newtheorem{defn}[thm]{\protect\definitionname}
\theoremstyle{question}
\theoremstyle{remark}
\newtheorem{rem}[thm]{\protect\remarkname}
\theoremstyle{plain}
\newtheorem{prop}[thm]{\protect\propositionname}
\theoremstyle{plain}
\newtheorem{lem}[thm]{\protect\lemmaname}
\theoremstyle{plain}
\newtheorem{cor}[thm]{\protect\corollaryname}
\numberwithin{equation}{section}
\numberwithin{figure}{section}
 \let\footnote=\endnote
\theoremstyle{definition}
\def\R{\mathbb{R}}
\def\N{\mathbb{N}}
\def\Z{\mathbb{Z}}
\keywords{}
\subjclass[2000]{}
\def\R{\mathbb{R}}
\def\loc{\text{loc}}
\def\A{\mathcal{A}}
\def\glr{\text{GL}_d(\R)}
  \providecommand{\corollaryname}{Corollary}
  \providecommand{\definitionname}{Definition}
  \providecommand{\lemmaname}{Lemma}
  \providecommand{\propositionname}{Proposition}
  \providecommand{\remarkname}{Remark}
  \providecommand{\theoremname}{Theorem}
\providecommand{\theoremname}{Theorem}
\definecolor{lime}{HTML}{A6CE39}
\DeclareRobustCommand{\orcidicon}{
	\begin{tikzpicture}
	\draw[lime, fill=lime] (0,0) 
	circle [radius=0.16] 
	node[white] {{\fontfamily{qag}\selectfont \tiny ID}};
	\draw[white, fill=white] (-0.0625,0.095) 
	circle [radius=0.007];
	\end{tikzpicture}
	\hspace{-2mm}
}
\author[Reza Mohammadpour]{Reza Mohammadpour\orcidA{} (Uppsala University)\\
\lowercase{reza.mohammadpour@math.uu.se}}
\address{Department of Mathematics, Uppsala University, Box 480, SE-75106, Uppsala, SWEDEN.}
\date{\today}
\subjclass[2010]{28A80, 28D20, 37D35, 37H15 }
\keywords{ Lyapunov exponents, multifractal formalism, topological entropy, typical cocycles}%
\email{reza.mohammadpour@math.uu.se}
\begin{document}
\title[ Entropy spectrum of Lyapunov exponents for typical cocycles ]{ Entropy spectrum of Lyapunov exponents for typical cocycles}

\maketitle
\begin{abstract}
In this paper, we study the size of the level sets of all Lyapunov exponents. For typical cocycles, we establish a variational relation between the topological entropy of the level sets of Lyapunov exponents and the topological pressure of the generalized singular value function.
\end{abstract}

\section{Introduction}
Suppose that $X$ is a compact metric space that is endowed with the
metric $d$. A continuous map $T: X\rightarrow X$ on the compact metric space $X$ is called a topological dynamical system (TDS) and we denote it by $(X,T)$. Let $\mathcal{M}(X)$ be the space of all Borel probability measures on $X$, and $\mathcal{M}(X,T)$ be the space of all $T$-invariant Borel probability measures on $X$.

Assume that $f:X\rightarrow \R$ is a continuous function. Denote by $S_{n}f(x):=\sum_{k=0}^{n-1}f(T^{k}(x))$ the \textit{Birkhoff sum}, and we call 
\[
\lim_{n\rightarrow \infty} \frac{1}{n}S_{n}f(x)
\]
 the \textit{Birkhoff average}.
 
The Birkhoff average converges to the integral of $f$ with respect to the ergodic invariant probability measure $\mu$, almost everywhere. However, there are numerous ergodic invariant measures where the limit exists but converges to a different value. Additionally, there are plenty of points where the Birkhoff average either does not exist or are not considered generic points for any ergodic measure. Therefore, we may ask about the size of the set of points
 \[E_{f}(\alpha)=\bigg\{x\in X : \frac{1}{n} S_{n}f(x)\rightarrow \alpha \hspace{0.2cm}\textrm{as}\hspace{0.2cm}n\rightarrow \infty \bigg\} ,\]
 which we call the $\alpha$\textit{-level set of Birkhoff spectrum}, for a given value $\alpha$ from the set
 \[ L_{f}=\bigg\{\alpha \in \R: \exists x \in X \hspace{0.2cm}\textrm{and} \lim_{n\rightarrow \infty}\frac{1}{n}S_{n}f(x)=\alpha \bigg\},\] 
which we call the \textit{Birkhoff spectrum.}  The size is determined using either the Hausdorff dimension or the topological entropy introduced by
Bowen in \cite{bowen} (See Subsection \ref{top_entropy}).

The topological entropy and Hausdorff spectrum of Birkhoff averages have been intensely
studied by several authors (see, e.g. \cite{BS01, BBS, O}) for different systems and are well understood for continuous potentials.

We say that $\Phi:=\{\log \phi_{n}\}_{n=1}^{\infty}$  is a \textit{subadditive potential} if each $\phi_{n}$ is a continuous positive-valued function on $X$ such that
\[ 0<\phi_{n+m}(x) \leq \phi_{n}(x) \phi_{m}(T^{n}(x)) \quad \forall x\in X, m,n \in \N.\]

Moreover, a sequence of continuous functions (potentials) $\Phi=\{\log\phi_{n}\}_{n=1}^{\infty}$ is said to be an \textit{almost additive potential} if there exists a constant $C > 0$ such that for any $m,n \in \N$, $x\in X$, we have
\[
C^{-1}\phi_{n}(x)\phi_{m}(T^{n})(x) \leq \phi_{n+m}(x)\leq C \phi_{n}(x) \phi_{m}(T^{n}(x)).
\]

By Kingman's subadditive theorem, for any $\mu \in \mathcal{M}(X, T)$ and $\mu$ almost every $x \in X$ such that $\log ^{+}\phi_{1} \in L^{1}(\mu)$, the following limit, called the \textit{top Lyapunov exponent} at $x$, exists:
$$
\chi(x, \Phi):=\lim _{n \rightarrow \infty} \frac{1}{n} \log \phi_{n}(x).
$$

Let $\mathcal{A}: X \rightarrow GL(d, \R)$ be a continuous function over a topological dynamical system $(X,T)$. We denote the product of $\mathcal{A}$ along the orbit of $x$ for time $n$, where $x$ is an element of $X$ and $n$ belongs to the set of natural numbers, as
$$
\mathcal{A}^{n}(x):=\mathcal{A}\left(T^{n-1} (x)\right) \ldots \mathcal{A}(x).
$$
The pair $(\mathcal{A}, T)$ is called a \textit{matrix cocycle}; when the context is clear, we say that $\mathcal{A}$ is a matrix cocycle. That induces a skew-product dynamics $F$ on $X\times \R^{k}$ by $(x, v)\mapsto X\times \R^{k}$, whose $n$-th iterate is therefore \[(x, v)\mapsto (T^{n}(x), \mathcal{A}^{n}(x)v).\] If $T$ is invertible then so is $F$. Moreover, $F^{-n}(x)=(T^{-n}(x), \mathcal{A}^{-n}(x)v)$ for each $n\geq1$, where
\[\mathcal{A}^{-n}(x):=\mathcal{A}(T^{-n}(x))^{-1}\mathcal{A}(T^{-n+1}(x))^{-1}...\mathcal{A}(T^{-1}(x))^{-1}.\]

A well-known example of matrix cocycles is \textit{one-step cocycles} which is defined as follows. Assume that $\Sigma=\{1,...,k\}^{\Z}$ is a symbolic space. Suppose that $T:\Sigma \rightarrow \Sigma$  is a shift map, i.e. $T(x_{l})_{l\in \Z}=(x_{l+1})_{l\in \Z}$. Given a $k$-tuple of matrices $\textbf{A}=(A_{1},\ldots,A_{k})\in \glr^{k}$ , we associate with it the locally constant map $\mathcal{A}:\Sigma \rightarrow \glr$ given by $\mathcal{A}(x)=A_{x_{0}},$ that means the matrix cocycle $\mathcal{A}$ depends only on the zero-th symbol $x_0$ of $(x_{l})_{l\in \Z}$. In this particular situation, we say that $(\mathcal{A}, T)$ is a one-step cocycle; when the context is clear, we say that $\mathcal{A}$ is a one-step cocycle. For any length $n$ word $I=i_{0}, \ldots, i_{n-1},$ (see Section \ref{prel} for the definition) we denote 
\[ \mathcal{A}_{I}:=A_{i_{n-1}}\ldots A_{i_{0}}.\]

 Assume that $\left(A_1, \ldots, A_k\right) \in GL(d, \R)^k$ generates a one-step cocycle $\mathcal{A}:\Sigma \to GL(d, \R).$ Motivated by the study of the multifractal formalism of Birkhoff averages, the level set of the top Lyapunov exponent of certain special subadditive potentials $\Phi=\left\{\log \phi_{n}\right\}_{n=1}^{\infty}$ on full shifts have been studied in \cite{Fe09, FH, BJKR, Moh22}, in which $\phi_{n}(x)=\left\|\mathcal{A}^n(x) \right\|$,  where $\|\cdot\|$ denotes the operator norm. In other words, our focus lies in determining the size of the set of points
 \[E(\alpha)=\bigg\{x\in \Sigma : \frac{1}{n} \log \left\|\mathcal{A}^{n}(x)\right\|\rightarrow \alpha \hspace{0.2cm}\textrm{as}\hspace{0.2cm}n\rightarrow \infty \bigg\} ,\]
 which we call the $\alpha$\textit{-level set of the top Lyapunov exponent}, for a given value $\alpha$ from the set
 \[ L=\bigg\{\alpha \in \R: \exists x \in \Sigma \hspace{0.2cm}\textrm{and} \lim_{n\rightarrow \infty}\frac{1}{n} \log \left\|\mathcal{A}^{n}(x)\right\|=\alpha\bigg\}.\]

The author \cite{Moh22} and Feng \cite{Fe09} calculated the entropy spectrum of the top Lyapunov exponent for generic matrix cocycles.

Let $\mu$ be an ergodic $T$-invariant measure. By Oseledets' theorem, there is a set $Y \subset \Sigma$ of full measure such
that if $x \in Y$ then the Lyapunov exponents $\chi_{1}(x, \mathcal{A}) \geq \chi_{2}(x, \mathcal{A})\geq \ldots \geq \chi_{d}(x, \mathcal{A})$, counted with multiplicity, exist. For $\vec{\alpha}:=(\alpha_1, \ldots, \alpha_d) \in \R^{d}$, we define the $\vec{\alpha}$-level set $I(\vec{\alpha})$ of the Lyapunov exponents by
$$
I(\vec{\alpha})=\bigg\{x \in Y: \chi_{i}(x, \mathcal{A})=\alpha_i \text { for } i=1, \ldots, d \bigg\}
$$
(we emphasize that selecting points in $Y$ implies the assumption of the existence of the limits.). This
suggests to consider, for each $\vec{\alpha}:=(\alpha_1, \ldots, \alpha_d) \in \R^{d}$, the $\vec{\alpha}$-level set
\[E(\vec{\alpha})=\bigg\{ x\in \Sigma: \lim_{n\to \infty}\frac{1}{n}\log \sigma_{i}(\mathcal{A}^{n}(x))=\alpha_i \text{ for }i=1,2, \ldots, d \bigg\},\]
where $\sigma_{1}, \ldots, \sigma_d$ are singular values, listed in decreasing order according to multiplicity. We also define the \textit{Lyapunov spectrum}

\[\vec{L}=\bigg\{\vec{\alpha} \in \R^{d}: \exists x \in \Sigma \text{ such that } \lim_{n \to \infty} \frac{1}{n}\log \sigma_{i}(\mathcal{A}^{n}(x))=\alpha_i \text{ for }i=1,2, \ldots, d \bigg\}.\]

Note that we have $E(\vec{\alpha})=I(\vec{\alpha}) (\bmod 0)$ for every $\vec{\alpha} \in \mathbb{R}^d$. Therefore, one may also think of the $\vec{\alpha}$-level set $E(\vec{\alpha})$ as a level set of the Lyapunov exponents. Our main goal is to calculate the topological entropy of these sets. More precisely, we want to show that the topological entropy of the $\vec{\alpha}$-level set $E(\vec{\alpha})$ is equal to the Legendre transform of the topological pressure for generic matrix cocycles. Note that a similar statement can be expressed in probabilistic language to establish a large deviation principle (LDP) for random matrix products (see \cite{sert}).

 Feng and Huang \cite{FH} calculated the topological entropy of $\vec{\alpha}$-level sets $E(\vec{\alpha})$ for almost additive potentials that improves a result of Barreira and Gelfert in
\cite{BG06} on Lyapunov exponents of nonconformal repellers. Notice that almost additivity condition holds only for a restrictive family of matrices. For instance, Bárány et al. \cite{BKM} showed that this condition for planar matrix
tuples is equivalent to domination, which is an open condition but not generic. One can find more information about the multifractal formalism in \cite{barreira_gelfert, climenhaga, Mohammadpour-survey}.

 In this paper, we consider \textit{typical cocycles} that are matrix cocycles with extra assumptions on some periodic point $p \in \Sigma$ and one of its homoclinic
points $z \in \Sigma$ (see Section \ref{prel} for the precise definition). We call this pair $(p, z)$ a
typical pair, and the typicality assumptions on the pair are suitable generalizations of
the proximality and strong irreducibility in the setting of random product of matrices.
Bonatti and Viana \cite{BV} introduced the notion of typical
cocycles. They showed that the set of typical cocycles is open and dense in
the set of fiber-bunched cocycles and that its complement has infinite codimension. Also, they proved that typical cocycles have simple Lyapunov exponents
with respect to any ergodic measures with continuous local product structure.

We consider typical cocycles  and $\vec{\alpha} \in \R^{d}.$ Then, we calculate the topological entropy of the $\vec{\alpha}$-level set $E(\vec{\alpha})$.   For simplicity, we say the level set $E(\vec{\alpha})$ instead of the $\vec{\alpha}$-level set $E(\vec{\alpha})$ if there is no confusion about $\vec{\alpha}.$

 We define \textit{Falconer's singular value function} $\varphi^{s}(\mathcal{A})$ as follows.  Let $k \in\{0, \ldots, d-1\}$ and $k \leq s<k+1$. Then,
$$
\varphi^{s}(\mathcal{A})=\sigma_{1}(\mathcal{A}) \cdots \sigma_{k}(\mathcal{A}) \sigma_{k+1}(\mathcal{A})^{s-k},
$$

and if $s\geq d,$ then $\varphi^{s}(\mathcal{A})=(\det(\mathcal{A}))^{\frac{s}{d}}.$

 For $q:=(q_{1}, \cdots,  q_{d})\in \R^d$, we define the \textit{generalized singular value function} $\psi^{q_{1}, \ldots, q_{d}}(\mathcal{A}): \mathbb{R}^{d \times d} \rightarrow[0, \infty)$ as
$$
\psi^{q_{1}, \ldots, q_{d}}(\mathcal{A}):=\sigma_{1}(\mathcal{A})^{q_{1}} \cdots \sigma_{d}(\mathcal{A})^{q_{d}}=\left(\prod_{m=1}^{d-1}\left\|\mathcal{A}^{\wedge m}\right\|^{q_{m}-q_{m+1}}\right)\left\|\mathcal{A}^{\wedge d}\right\|^{q_{d}}.
$$

 When $s \in [0, d]$, the singular value function $\varphi^{s}(\mathcal{A}(\cdot))$ coincides with the generalized singular value function $\psi^{q_{1}, \ldots, q_{d}}(\mathcal{A}(\cdot))$ where
$$
\left(q_{1}, \ldots, q_{d}\right)=(\underbrace{1, \ldots, 1}_{m \text { times }}, s-m, 0, \ldots, 0),
$$
with $m=\lfloor s\rfloor$. We denote $\psi^{q}(\mathcal{A}):=\psi^{q_{1}, \ldots, q_{d}}(\mathcal{A}).$ We should notice that even though there is some similarity between the previous expressions, Falconer’s singular value function $\varphi^{s}(\mathcal{A})$ is submultiplicative, whereas the generalized singular value function $\psi^{q}(\mathcal{A})$ is neither submultiplicative nor supermultiplicative.

 Notice that the limit in defining the topological pressure $P \left(\log \psi^{q}(\mathcal{A})\right)$ exists for any $q \in \R^{d}$ when $\mathcal{A}$ is a typical cocycle (see Section \ref{section-upperbound}). 
 \begin{thmA}
 Assume that $\left(A_1, \ldots, A_k\right) \in GL(d, \R)^k$ generates a one-step cocycle $\mathcal{A}:\Sigma \to GL(d, \R).$   Let $\mathcal{A}:\Sigma \to GL(d,\R)$ be a typical cocycle.  Then
  \[h_{\mathrm{top}}(E(\vec{\alpha}))= \inf_{q\in \R^d}\left\{P \left(\log \psi^{q}(\mathcal{A})\right)- \langle q, \vec{\alpha} \rangle \right\}\]
  for all $\vec{\alpha}\in \mathring{\vec{L}}.$ 
  \end{thmA}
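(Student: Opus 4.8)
The plan is to establish the two inequalities separately: ``$\le$'' for every $\vec\alpha\in\vec L$, and ``$\ge$'' only for $\vec\alpha\in\mathring{\vec L}$. The place where typicality is used is the thermodynamic formalism of $\log\psi^{q}(\mathcal{A})$: since $\mathcal{A}$ is typical, each exterior power cocycle $\mathcal{A}^{\wedge m}$ is quasi-multiplicative (cf.\ \cite{BV}), and this not only makes the limit defining $P\!\left(\log\psi^{q}(\mathcal{A})\right)$ exist for every $q\in\R^{d}$ (as already recorded before the statement) but also yields the variational principle
\[
P\!\left(\log\psi^{q}(\mathcal{A})\right)=\sup_{\mu\in\mathcal{M}(\Sigma,T)}\left\{h_{\mu}(T)+\langle q,\vec\chi(\mu)\rangle\right\},\qquad \vec\chi(\mu):=(\chi_{1}(\mu,\mathcal{A}),\dots,\chi_{d}(\mu,\mathcal{A})),
\]
together with a \emph{unique}, hence ergodic, equilibrium state $\mu_{q}$ of $\log\psi^{q}(\mathcal{A})$ for each $q$. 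Two elementary remarks will be used throughout: for a one-step cocycle on the full shift $\psi^{q}(\mathcal{A}^{n}(x))=\psi^{q}(\mathcal{A}_{I})$ is constant on each $n$-cylinder $[I]$; and since $\tfrac1n\log\|\mathcal{A}^{\wedge m,n}(x)\|\to\chi_{1}(\mu,\mathcal{A})+\dots+\chi_{m}(\mu,\mathcal{A})$ for $\mu$-a.e.\ $x$, Abel summation gives $\tfrac1n\log\psi^{q}(\mathcal{A}^{n}(x))\to\langle q,\vec\chi(\mu)\rangle$ and, by Oseledets' theorem, $\tfrac1n\log\sigma_{i}(\mathcal{A}^{n}(x))\to\chi_{i}(\mu,\mathcal{A})$ for $\mu$-a.e.\ $x$.

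\textbf{Upper bound.} Fix $q\in\R^{d}$ and $\varepsilon>0$. Every $x\in E(\vec\alpha)$ satisfies $\tfrac1n\log\psi^{q}(\mathcal{A}^{n}(x))\to\langle q,\vec\alpha\rangle$, so
\[
E(\vec\alpha)\subseteq\bigcup_{N\ge1}\ \bigcap_{n\ge N}\left\{x\in\Sigma:\ \psi^{q}(\mathcal{A}^{n}(x))\ge e^{\,n(\langle q,\vec\alpha\rangle-\varepsilon)}\right\}.
\]
For each fixed $n$ the inner set is a union of $n$-cylinders, and by a Chebyshev-type estimate the number of $n$-cylinders $[I]$ it contains is at most $e^{-n(\langle q,\vec\alpha\rangle-\varepsilon)}\sum_{|I|=n}\psi^{q}(\mathcal{A}_{I})$, which by the definition of the pressure equals $e^{\,n(P(\log\psi^{q}(\mathcal{A}))-\langle q,\vec\alpha\rangle+\varepsilon)+o(n)}$. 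Covering (for $n\to\infty$) and invoking Bowen's definition of topological entropy \cite{bowen}, which is countably stable, gives $h_{\mathrm{top}}(E(\vec\alpha))\le P\!\left(\log\psi^{q}(\mathcal{A})\right)-\langle q,\vec\alpha\rangle+\varepsilon$; letting $\varepsilon\to0$ and taking $\inf_{q\in\R^{d}}$ yields ``$\le$''. This step uses neither the interior hypothesis nor any (sub- or super-)multiplicativity of $\psi^{q}$.

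\textbf{Lower bound.} Assume $\vec\alpha\in\mathring{\vec L}$ and write $Q(q):=P\!\left(\log\psi^{q}(\mathcal{A})\right)$, a finite convex function on $\R^{d}$. From the variational principle its Legendre--Fenchel conjugate satisfies $Q^{\ast}(\vec\alpha)=-\sup\{h_{\mu}(T):\mu\in\mathcal{M}(\Sigma,T),\ \vec\chi(\mu)=\vec\alpha\}$, so $\operatorname{dom}Q^{\ast}=\{\vec\chi(\mu):\mu\in\mathcal{M}(\Sigma,T)\}$, a compact convex set whose interior is exactly $\mathring{\vec L}$. Hence for $\vec\alpha$ in this interior the infimum $\inf_{q}\{Q(q)-\langle q,\vec\alpha\rangle\}$ is attained at some $q^{\ast}$ with $\vec\alpha\in\partial Q(q^{\ast})$. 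Because the equilibrium state $\mu_{q^{\ast}}$ of $\log\psi^{q^{\ast}}(\mathcal{A})$ is unique, $\partial Q(q^{\ast})=\{\vec\chi(\mu_{q^{\ast}})\}$, so $\vec\chi(\mu_{q^{\ast}})=\vec\alpha$ and
\[
h_{\mu_{q^{\ast}}}(T)=Q(q^{\ast})-\langle q^{\ast},\vec\chi(\mu_{q^{\ast}})\rangle=\inf_{q\in\R^{d}}\left\{P\!\left(\log\psi^{q}(\mathcal{A})\right)-\langle q,\vec\alpha\rangle\right\}.
\]
By Oseledets' theorem $\tfrac1n\log\sigma_{i}(\mathcal{A}^{n}(x))\to\chi_{i}(\mu_{q^{\ast}},\mathcal{A})=\alpha_{i}$ for $\mu_{q^{\ast}}$-a.e.\ $x$, hence $\mu_{q^{\ast}}(E(\vec\alpha))=1$; since $\mu_{q^{\ast}}$ is ergodic, the standard inequality $h_{\mathrm{top}}(Z)\ge h_{\mu}(T)$ whenever $\mu(Z)=1$ (a consequence of \cite{bowen}) gives $h_{\mathrm{top}}(E(\vec\alpha))\ge h_{\mu_{q^{\ast}}}(T)$, which closes the argument.

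\textbf{Main obstacle.} I expect the difficulty to lie in the lower bound, specifically in two inputs that must be secured for \emph{typical} cocycles rather than only for the almost-additive class treated in \cite{FH}: first, that the generalized singular value function $\psi^{q}$ -- which, as emphasized before the statement, is neither sub- nor super-multiplicative -- nevertheless carries a well-behaved thermodynamic formalism with a unique ergodic equilibrium state, which is precisely where the Bonatti--Viana typicality and the ensuing quasi-multiplicativity of the exterior powers are indispensable; and second, the convex-analytic identification $\mathring{\vec L}=\operatorname{int}\{\vec\chi(\mu):\mu\in\mathcal{M}(\Sigma,T)\}=\operatorname{int}(\operatorname{dom}Q^{\ast})$, which is what makes the infimum attained at an interior $q^{\ast}$. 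Should the uniqueness of equilibrium states not be available in the needed generality, the fallback is to replace the single measure $\mu_{q^{\ast}}$ by a Moran-type subset of $E(\vec\alpha)$ built by concatenating long generic blocks of a sequence of ergodic measures whose Lyapunov vectors tend to $\vec\alpha$ and whose entropies tend to $\inf_{q}\{Q(q)-\langle q,\vec\alpha\rangle\}$, using the specification property of the full shift and quasi-multiplicativity to control the singular values along the concatenations, in the spirit of \cite{FH} and, for the top exponent, \cite{Fe09,Moh22}.
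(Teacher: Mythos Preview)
Your upper bound is essentially the paper's argument (the paper writes it with the sets $G(\vec\alpha,n,r)$ and Remark~\ref{(n, 1)separated sets}, you with a Chebyshev count, but the content is the same).

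The lower bound, however, has a real gap. You assume that for every $q\in\R^{d}$ the potential $\log\psi^{q}(\mathcal{A})$ enjoys a full thermodynamic formalism: a variational principle $P(\log\psi^{q}(\mathcal{A}))=\sup_{\mu}\{h_{\mu}(T)+\langle q,\vec\chi(\mu)\rangle\}$ and a \emph{unique} (hence ergodic) equilibrium state $\mu_{q}$, so that $\partial Q(q)=\{\vec\chi(\mu_{q})\}$. None of this is available in the literature for typical cocycles and general $q$. The Cao--Feng--Huang variational principle \eqref{varitional} requires subadditivity; the Feng--Huang almost-additive multifractal formalism \cite{FH} requires almost-additivity; Park's results for typical cocycles establish quasi-multiplicativity and uniqueness of equilibrium states for the \emph{submultiplicative} Falconer function $\varphi^{s}$, not for $\psi^{q}$. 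As the paper stresses right before Theorem~A, $\psi^{q}$ is neither sub- nor super-multiplicative, and simultaneous quasi-multiplicativity of the exterior powers (which involves an inserted word $K$) does \emph{not} make $\{\log\psi^{q}(\mathcal{A}^{n})\}$ almost additive. So the step ``$\mu_{q^{\ast}}$ exists, is unique, and $\partial Q(q^{\ast})=\{\vec\chi(\mu_{q^{\ast}})\}$'' is unjustified, and your identification $\mathring{\vec L}=\operatorname{int}\{\vec\chi(\mu):\mu\in\mathcal{M}(\Sigma,T)\}$ is likewise not established. Your fallback Moran construction does not escape the problem: to carry it out you still need a supply of ergodic measures with $\vec\chi(\mu)$ near $\vec\alpha$ and $h_{\mu}(T)$ near the target value, which is exactly what the missing formalism would have given you.

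The paper takes a genuinely different route to the lower bound. Using the typicality (cone arguments from \cite{Par22}) it builds, for each $n$, a \emph{dominated} one-step subsystem $\mathcal{B}$ over $(\mathcal{L}^{\mathcal D}_{\ell(n)})^{\Z}$ by pre/appending to every word $I\in\mathcal L_{n}$ bounded-length words $J_{1}(I),J_{2}(I)$ (Corollary~\ref{dominated-one-step-cocycle}). On a dominated cocycle each $\{\log\sigma_{i}(\mathcal B^{k})\}$ is almost additive (Proposition~\ref{prop:add}), so the Feng--Huang/Barreira almost-additive theory applies and gives Theorem~\ref{dominated_case}, i.e.\ the desired identity on each subsystem. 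The remaining work is to show that the subsystem pressures $\tfrac{1}{\ell}P_{\ell,\mathcal D}(\langle q,\Psi(\mathcal B)\rangle)$ converge to $P(\log\psi^{q}(\mathcal A))$ uniformly on compacta in $q$ (Theorem~\ref{continuity_potential}), and that the level sets nest correctly (inequality~\eqref{eq1}); a sandwich then finishes the proof. In short, the paper never needs a thermodynamic formalism for $\psi^{q}$ itself: it transfers the problem to dominated subsystems where the classical almost-additive machinery is available, and then passes to the limit.
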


In Section 2 we introduce some notation and preliminaries. In Section 3 we prove the upper bound of Theorem A. The key idea in
the proof of Theorem A is to find the dominated subsystems for typical cocycles, and then we prove that the topological pressure over the dominated subset converges to the topological pressure over all points that are done in Section 4. We prove Theorem A for dominated cocycles in Section 5. Finally, we prove Theorem A in Section 6.
  
\subsection{Acknowledgements.} The author thanks Ville Salo, Michal Rams and Kiho Park for helpful discussions. He also thanks the anonymous referee for their valuable corrections and suggestions.

\section{Preliminaries}\label{prel}
\subsection{Subshifts of finite type and standing notations.} 
Assume that $Q=(q_{ij})$ is a matrix $k\times k$ with $q_{ij}\in \{0, 1\}.$  The (two sided) subshift of finite type associated to the matrix $Q$ is a left shift map $T:\Sigma_{Q}\rightarrow \Sigma_{Q}$ i.e., $T(x_{n})_{n\in \Z}=(x_{n+1})_{n\in \Z}$, where $\Sigma_{Q}$ is the set of sequences 
\[\Sigma_{Q}:=\{x=(x_{i})_{i\in \Z} : x_{i}\in \{1,...,k\} \hspace{0.2cm}\textrm{and} \hspace{0.1cm}Q_{x_{i}, x_{i+1}}=1 \hspace{0.2cm}\textrm{for all}\hspace{0.1cm}i\in \Z\};\]
denote it by $(\Sigma_{Q}, T)$. 
In the case where all entries of the matrix $Q$ are equal to 1, we say that is the \textit{full shift}. For simplicity, we denote that  $\Sigma_{Q}=\Sigma$.

 We call that $i_{0}...i_{k-1}$ is an \textit{admissible word} if $Q_{i_{n},i_{n+1}}= 1$ for all $0\leq n \leq k-2$. Let $\mathcal{L}$ denote the collection of admissible words, and $\mathcal{L}_n$ denote the set of admissible words of length $n$. An admissible word of length $n$ is defined as a word $x_0, x_1, ..., x_{n-1}$ such that $x_i \in \{1, 2, ..., k\}$ and $Q_{x_i, x_{i+1}} = 1$. 

In the case of a full shift $(\Sigma, T)$, $\mathcal{L}$ represents the set of all words, while $\mathcal{L}_n$ represents the set of words of length $n$. The length of a word $I$ in $\mathcal{L}$ is denoted by $|I|$. 

We can define the $n$-th level cylinder $[I]$ as follows:
\[ [I]=[i_{0}...i_{n-1}]:=\{x\in \Sigma : x_{j}=i_{j} \hspace{0.2cm}\forall \hspace{0.1cm} 0\leq j\leq n-1 \},\]
for any $i_{0}...i_{n-1}\in \mathcal{L}_n.$

A cylinder containing $x=\left(x_{i}\right)_{i \in \mathbb{Z}} \in \Sigma$ of length $n \in \mathbb{N}$ is defined by
$$
[x]_{n}:=\left\{\left(y_{i}\right)_{i \in \mathbb{Z}} \in \Sigma: x_{i}=y_{i} \text { for all } 0 \leq i \leq n-1\right\}.
$$

We say that the matrix $Q$ is primitive when there exists $n>0$ such that all the entries of $Q^n$ are positive.  The primitivity of $Q$ is equivalent to
the mixing property of the corresponding subshift of finite type $(\Sigma , T)$, and such constant
$n$ is called the mixing rate of $\Sigma.$

We consider the space $\Sigma$ is endowed with the metric $d$ which is  defined as follows: For $x=(x_{i})_{i\in \Z}, y=(y_{i})_{i\in \Z} \in \Sigma$, we have 
\begin{equation}\label{metric}
d(x,y)= 2^{-k},
\end{equation} 
where $k$ is the largest integer such that $x_{i}=y_{i}$ for all $|i| <k.$

In the two-sided dynamics, we define the \textit{local stable set}
\[ W_{\loc}^{s}(x)=\{(y_{n})_{n\in \Z} : x_{n}=y_{n} \hspace{0,2cm}\textrm{for all}\hspace{0.2cm} n\geq 0\} \]
and the \textit{local unstable set}
\[ W_{\loc}^{u}(x)=\{(y_{n})_{n\in \Z} : x_{n}=y_{n} \hspace{0,2cm}\textrm{for all}\hspace{0.2cm} n \leq 0\} .\]

Furthermore, the global stable and unstable manifolds of $x \in \Sigma$ are
\[W^{s}(x):=\left\{y \in \Sigma: T^{n} y \in  W_{\loc}^{s}(T^{n}(x))\text { for some } n \geq 0\right\},\]
\[
W^{u}(x):=\left\{y \in \Sigma: T^{n} y \in W_{\loc}^{u}(T^{n}(x)) \text { for some } n \leq 0\right\}
.\]

The two side subshift of finite type $T:\Sigma \rightarrow \Sigma$ becomes a hyperbolic homeomorphism (see \cite[Subsection 2.3]{AV10}), if $\Sigma$ is equipped by the metric $d$ (see \eqref{metric}).

For any $x, y \in \Sigma$ with $x_{0}=y_{0}$ (i.e., $y \in [x]_{1}$ ), we define
$$
[x, y]:=W_{\loc}^{u}(x) \cap W_{\loc}^{s}(y) .
$$
In particular, $[x, y]$ is the unique point in $[x]_{1}$ whose forward orbit shadows that of $y$ and the backward orbit shadows that of $x$ synchronously.

 \subsection{Multilinear algebra}\label{wedge_product}

Let $A \in \glr.$ We recall that  $\sigma_{1},...,\sigma_{d}$ are the singular values of the matrix $A$. For $A\in GL(d, \R)$, we define an invertible linear map $A^{\wedge l} : \land^{l} \R^{d} \rightarrow \land^{l} \R^{d}$ as follows
\[ (A^{\wedge l}(e_{i_{1}}\wedge e_{i_{2}} \wedge ... \wedge e_{i_{l}}))= Ae_{i_{1}}\wedge Ae_{i_{2}} \wedge ... \wedge Ae_{i_{l}},\]
where $e_i$'s are the standard orthogonal basis of $\R^d.$

$A^{\wedge l}$ can be represented by a $\binom dl \times \binom dl$ whose entries are the $l \times l$ minors of $A$. It can be also shown that 
\[(AB)^{\wedge l}=A^{\wedge l} B^{\wedge l}, \text{and }\|A^{\wedge l}\|=\sigma_{1}(A)...\sigma_{l}(A).\]

\subsection{Fiber bunched}

Let $T:\Sigma \to \Sigma$ be a topologically mixing subshift of finite type. We say that a subadditive potential $\Phi:=\{\log \phi_{n}\}_{n=1}^{\infty}$ over  $(\Sigma, T)$ has \textit{bounded distortion:}  there exists $C\geq 1$ such that for any $n\in \N$ and $I \in \mathcal{L}_{n}$, we have
\[ C^{-1} \leq \frac{\phi_{n}(x)}{\phi_{n}(y)} \leq C\]
for any $x, y \in [I].$

Natural examples of such class of subadditive potentials that has bounded distortion are the singular value potentials $\varphi^{s}(\mathcal{A})$
 of one-step $GL(d, \R)$-cocycles $\mathcal{A}$, where $\varphi^{s}(\mathcal{A}^{n}(x))=\varphi^{s}(\mathcal{A}^{n}(y))$ for any $x, y \in [I].$

 We say that $\mathcal{A}:\Sigma \rightarrow GL(d, \R)$ is an $\alpha$-H\"older continuous function, if there exists $C>0$ such that
\begin{equation}\label{hol}
 \|\mathcal{A}(x)-\mathcal{A}(y)\|\leq Cd(x,y)^{\alpha} \hspace{0,2cm} \forall x,y \in \Sigma.
\end{equation}
For $\alpha> 0$ we let $H^{\alpha}(\Sigma, GL(d, \R))$ be the set of $\alpha$-H\"older continuous functions over the shift with respect to the
metric $d$ on $\Sigma$.

\begin{defn}\label{holonomy}
A \textit{local stable holonomy} for the matrix cocycle $(\mathcal{A}, T)$ is a family of matrices $H_{y \leftarrow x}^{s} \in GL(d, \R)$ defined for all $x\in \Sigma$ with $y\in W_{\loc}^{s}(x)$ such that
\begin{itemize}
\item[a)]$H_{x \leftarrow x}^{s}=Id$ and $H_{z \leftarrow y}^{s} \circ H_{y \leftarrow x}^{s}=H_{z \leftarrow x}^{s}$ for any $z,y \in W_{\loc}^{s}(x)$.
\item[b)] $\mathcal{A}(y)\circ H_{y \leftarrow x}^{s}=H_{T(y) \leftarrow T(x)}^{s}\circ \mathcal{A}(x).$
\item[c)] $(x, y, v)\mapsto H_{y\leftarrow x}(v)$ is continuous.
\end{itemize}
\end{defn}
The local unstable holonomy $H_{y \leftarrow x}^{u}$ is likewise defined as
$$
H_{y \leftarrow x}^{u}:=\lim _{n \rightarrow-\infty} \mathcal{A}^n(y)^{-1} \mathcal{A}^n(x)
$$
for any $y\in W_{\loc}^{u}(x)$, and it satisfies similar properties as those mentioned above, but with $s$ and $T$ replaced by $u$ and $T^{-1}$, respectively.

According to $(b)$ in the above definition, one can extend the definition to the global stable holonomy $H_{y\leftarrow x}^{s}$ for $y\in W^{s}(x)$ not necessarily in $W_{\loc}^{s}(x)$ :
\begin{equation}\label{extension of holonomy}
H_{y\leftarrow x}^{s}=\mathcal{A}^{n}(y)^{-1} \circ H_{T^{n}(y)\leftarrow T^{n}(x)}^{s}\circ \mathcal{A}^{n}(x),
\end{equation} 
where 
$n\in \N$ is large enough such that $T^{n}(y)\in W_{\loc}^{s}(T^{n}(x))$. One can extend the definition of the global unstable holonomy similarly.

\begin{defn}
A $\alpha$-H\"older continuous function $\mathcal{A}$ is called \textit{fiber bunched} if for any $x\in \Sigma$, 
\begin{equation}\label{fiber}
\|\mathcal{A}(x)\|\|\mathcal{A}(x)^{-1}\|\left(\frac{1}{2}\right)^{\alpha}<1.
\end{equation} 

\end{defn}
We say that the matrix cocycle $(\mathcal{A}, T)$ is fiber-bunched if its generator $\mathcal{A}$ satisfies the fiber-bunching condition. We denote by  $H_{b}^{\alpha}(\Sigma, GL(d, \R))$ the space of fiber-bunched cocycles. Under the conditions of H\"older continuity and fiber bunched assumption on $\mathcal{A}\in H_{b}^{\alpha}(\Sigma, GL(d, \R))$, the convergence of the canonical holonomy $H^{s \diagup u}$ is implied. This means that for any $y \in W_{\loc}^{s \diagup u}(x)$, we have the limits
\[H_{y\leftarrow x}^{s} := \lim_{n \rightarrow \infty} \mathcal{A}^{n}(y)^{-1} \mathcal{A}^{n}(x)\]
and
\[H_{y\leftarrow x}^{u} := \lim_{n \rightarrow -\infty} \mathcal{A}^{n}(y)^{-1} \mathcal{A}^{n}(x).\]
Furthermore, the canonical holonomies vary $\alpha$-H\"older continuously (see \cite{KS}), which means that there exists a constant $C > 0$ such that for $y \in W_{\loc}^{s \diagup u}(x)$, we have
\[\|H_{y \leftarrow x}^{s \diagup u} - \mathbb{I}\| \leq C d(x,y)^{\alpha}.\]
It is important to note that in this paper, we will always consider the canonical holonomies. Additionally, it is worth mentioning that the existence of canonical holonomies is guaranteed for one-step cocycles (see \cite[Proposition 1.2]{BV} and \cite[Remark 1]{Moh22}).

Let $x, y \in \Sigma$, $x^{\prime} \in W_{\text {loc }}^{u}(x)$, and $y^{\prime} \in W_{\text {loc }}^{s}(y)$ such that $y^{\prime}=T^{n} (x^{\prime})$ for some $n \in \mathbb{N}$. We can describe these points as forming a path (of length $n$) from $x$ to $y$ via $x'$ and $y'$, which can be represented as:
$$
x \stackrel{W_{\text {loc }}^{u}(x)}{\longrightarrow} x^{\prime} \stackrel{T^{n}}{\longrightarrow} y^{\prime} \stackrel{W_{\text {loc }}^{s}(y)}{\longrightarrow} y.
$$
This path indicates a connection from $x$ to $y$. When considering such a path along with a matrix cocycle $\mathcal{A}: \Sigma \rightarrow GL(d, \R)$ that admits the canonical holonomies, we can define the path cocycle as follows:
$$
B_{x, x^{\prime},y^{\prime},  y}:=H_{y \leftarrow y^{\prime}}^{s} \mathcal{A}^{n}\left(x^{\prime}\right) H_{x^{\prime} \leftarrow x}^{u}.
$$

\subsection{Typical cocycles}
Let $T:\Sigma \to \Sigma$ be a topologically mixing subshift of finite type.
 Suppose that $p\in \Sigma$ is a periodic point of $T$, we say $p\neq z\in \Sigma$ is a \textit{homoclinic point} associated to $p$ if it is the intersection of the stable and unstable manifold of p. That is, $z\in W^{s}(p) \cap W^{u}(p)$. We denote the set of all homoclinic points of $p$ by
$\mathcal{H}(p)$. Then, we define the \textit{holonomy loop} \[W_{p}^{z}:=H_{p \leftarrow z}^s \circ H_{z \leftarrow p}^u. \]
 
Up to replacing $z$ by some backward iterate, we may suppose that $z\in W_{\text{loc}}^{u}(p)$ and $T^{n}(z)\in W_{\text{loc}}^{s}(p)$ for
some $n \geq 1$, which may be taken as a multiple of the period of $p$. Then, by the analogue of
\eqref{extension of holonomy} for stable holonomies,
\[ W_{p}^{z}=\mathcal{A}^{-n}(p)\circ H_{p \leftarrow T^{n}(z)}^{s} \circ \mathcal{A}^{n}(z) \circ H_{z \leftarrow p}^{u}.\]

\begin{defn}\label{typical1}
Suppose that $\mathcal{A}:\Sigma \rightarrow GL(d, \R)$ is a one-step cocycle or that  belongs $H_{b}^{\alpha}(\Sigma, GL(d, \R))$. We say that $\mathcal{A}$ is \textit{1-typical} if there exist a periodic point $p$ and a homoclinic point $z$ associated to $p$ such that:
\begin{itemize}
\item[(i)] The eigenvalues of  $\mathcal{A}^{per(p)}(p)$ have multiplicity $1$ and distinct absolute values,
\item[(ii)] We denote by $\left\{v_{1}, \ldots, v_{d}\right\}$ the eigenvectors of $\mathcal{A}^{per(p)}(p)$, for any $I, J \subset \{1, \ldots, d\}$ with $|I|+$ $|J| \leq d$, the set of vectors
$$
\left\{W_{p}^{z} \left(v_{i}\right): i \in I\right\} \cup\left\{v_{j}: j \in J\right\}
$$
is linearly independent.
\end{itemize}

We say $\mathcal{A}$ is \textit{typical} if $\mathcal{A}^{\wedge t}$ is 1-typical with respect to the same typical pair $(p, z)$ for all $1 \leq t \leq d-1$.
\end{defn}
The fiber-bunching condition in the above definition serves the purpose of ensuring the convergence of the canonical holonomies. However, it is possible to define 1-typicality for matrix cocycles that are not necessarily fiber-bunched, but still have canonical holonomies. For instance, even though the exterior product cocycles $\mathcal{A}^{\wedge t}$ may not be fiber-bunched, they still admit canonical holonomies. Examples of such cocycles can be found among one-step cocycles. Therefore, it is reasonable to consider the 1-typicality assumption on the exterior product cocycles, as we did in Definition \ref{typical1}.

\begin{rem}\label{fixed point} For simplicity, we will always let $p$ be a fixed point by passing to the power $\mathcal{A}^{\text{per}(p)}$
if necessary (because powers of typical cocycles are typical). Moreover, for any homoclinic point $z \in \mathcal{H}(p)$, $T^n (z)$ is a homoclinic point of $p$ for any  $n \in \mathbb{Z}$. That implies that if $z \in \mathcal{H}(p)$ satisfies $(ii)$, then so does any point $T^n (z) \in \mathcal{H}(p)$ in its orbit too. Therefore, we can replace $z$ by any point in its orbit without destroying $(ii)$. 
\end{rem}

The definition of typical cocycles described above is slightly stronger than the typical cocycles introduced by Bonatti and Viana \cite{BV}. In their definition, they only require 1-typicality of $\mathcal{A}^{\wedge t}$ for $1 \leq t \leq d / 2$, and they do not require the typical pair $(p, z)$ to be the same pair over different $t$. Despite our version of typicality being slightly stronger, it is still possible to modify their techniques to prove that the set of typical cocycles is open and dense (for instance, see \cite[Section 5]{BPVL}).

\subsection{Topological pressure}

Let $(X, d)$ be a compact metric space and $T:X\rightarrow X$ be a continuous map. For any $n\in \N$, we define a metric $d_{n}$ on $X$ as follows
\begin{equation}\label{new_metric}
 d_{n}(x, y)=\max\{d(T^{k}(x), T^{k}(y)) : k=0,...,n-1\}.
\end{equation}
For any $\epsilon>0$ a set $E \subset X$ is called to be a $(n,\epsilon)$-\textit{separated  subset}  of $X$ if $d_{n}(x,y)> \epsilon$ (see \eqref{new_metric}) for any two different points $x,y \in E$.

Assume that $\Phi=\{\log \phi_{n}\}_{n=1}^{\infty}$ is a subadditive potential over $(X, T)$. The $\textit{topological pressure}$ of $\Phi$ is defined as follows:
\[ P_{n}(T, \Phi, \epsilon)=\sup \left\{\sum_{x\in E} \phi_{n}(x) : E \hspace{0,1cm}\textrm{is} \hspace{0,1cm}(n, \epsilon) \textrm{-separated subset of }X \right\}.\]
Since $P_{n}(T, \Phi, \epsilon)$ is a decreasing function of $\epsilon$, we define 
 \[P(T, \Phi, \epsilon)=\limsup_{n \rightarrow \infty} \frac{1}{n} \log P_{n}(T, \Phi, \epsilon),\] and
\begin{equation}\label{epsilon}
P(\Phi)=\lim_{\epsilon \rightarrow 0}P(T, \Phi, \epsilon).
\end{equation} 
We call $ P(\Phi)$ the \textit{topological pressure} of $\Phi$, whose existence of the limit is guaranteed from the subadditivity of $\Phi.$

\begin{rem}\label{(n, 1)separated sets}Since the subshift of finite type $\left(\Sigma, T\right)$ is expansive \footnotemark\footnotetext{1 is an expansive constant.}, the pressure $P(\Phi)$ may be expressed as follows:
\[P(\Phi)=\limsup _{n \rightarrow \infty} \frac{1}{n} \log \sup \left\{\sum_{x \in E} \phi_{n}(x):E \hspace{0,1cm}\textrm{is} \hspace{0,1cm}(n, 1) \textrm{-separated subset of }\Sigma \right\};\]
that is, we can compute the pressure by looking at $(n, 1)$-separated sets, and drop the limit in $\epsilon$ from the definition of the pressure \eqref{epsilon}.
\end{rem}

Cao, Feng, and Huang \cite{CFH} prove the subadditive variational principle:
\begin{equation}\label{varitional}
P(\Phi)=\sup \bigg\{h_{\mu}(T)+\chi(\mu, \Phi): \mu \in \mathcal{M}(X, T) \bigg\},
\end{equation}
where $h_{\mu}(T)$ is the measure-theoretic entropy and \[\chi(\mu, \Phi):=\lim_{n\to \infty}\int \frac{\log \phi_{n}(x)}{n}d\mu(x).\]

An invariant measure $\mu \in \mathcal{M}(X, T)$ achieving the
supremum in \eqref{varitional} is called an \textit{equilibrium state} of $\Phi$. Moreover, at least one equilibrium
state necessarily exists for any subadditive potential $\Phi$ if the entropy map $\mu \mapsto h_{\mu}(T)$
is upper semi-continuous.

\subsection{Topological entropy}\label{top_entropy}

Let $T:X \to X$ be a topological dynamical system. For any $n\in \N$ and $\epsilon>0$ we define \textit{Bowen ball} $B_{n}(x, \epsilon)$ as follows:
\[ B_{n}(x, \epsilon)=\{y\in X : d_{n}(x, y)<\epsilon\}.\]
Consider a set $Y \subset X$ and $\epsilon>0$. We define a covering of $Y$ as a countable collection of balls $\mathcal{Y}:=\left\{B_{n_{i}}\left(y_{i}, \epsilon\right)\right\}_{i}$ such that $Y$ is contained within the union of these balls, i.e., $Y \subset \bigcup_{i} B_{n_{i}}\left(y_{i}, \epsilon\right)$. For a given collection $\mathcal{Y}=\left\{B_{n_{i}}\left(y_{i}, \epsilon\right)\right\}_{i}$, we define $n(\mathcal{Y})$ as the minimum value of $n_i$ among all indices $i$.  Let $s\geq 0$ and define
\[ S(Y, s, N, \epsilon)=\inf \sum_{i} e^{-sn_{i}},\]
 where the infimum is taken over all collections $\mathcal{Y}=\{B_{n_{i}}(x_{i}, \epsilon)\}_{i}$ that cover $Y$ and satisfy $n(\mathcal{Y})\geq N$. As $S(Y, s, N, \epsilon)$ is non-decreasing with respect to $N$, the limit  $S(Y, s, N, \epsilon)$ exists
 \[ S(Y, s, \epsilon): = \lim_{N\rightarrow \infty} S(Y, s, N, \epsilon).\]
 There is a critical value of the parameter $s$, which we denote by $h_{top}(T,Y, \epsilon)$ such that
\[S(Y, s, \epsilon)=\left\{\begin{array}{ll}
         0, & \mbox{$s>h_{top}(T,Y, \epsilon)$},\\
        \infty, & \mbox{$s<h_{top}(T,Y, \epsilon)$}.\end{array} \right . \] 
        
        Since $h_{top}(T,Y, \epsilon)$ does not decrease with $\epsilon$, the following limit exists,
        \[h_{top}(T,Y)=\lim_{\epsilon \rightarrow 0}(T,Y,\epsilon).\]
        We call $h_{top}(T, Y)$ the \textit{topological  entropy}  of $T$ restricted  to $Y$ or the topological entropy  of $Y$ (we denote $h_{top}(Y)$), as  there  is  no  confusion  about $T$. We denote $h_{top}(X, T)=h_{top}(T).$  For subshifts of finite type with the $d$ metric, we can just take $\epsilon = 1.$
\section{Upper bound}\label{section-upperbound}
 Assume that $\left(A_1, \ldots, A_k\right) \in GL(d, \R)^k$ generates a one-step cocycle $\mathcal{A}:\Sigma \to GL(d, \R).$

We say that a one-step cocycle $\mathcal{A}: \Sigma \to GL(d, \R)$ is \textit{simultaneously quasi-multiplicative} if there exist $C>0$ and $k \in \N$ such that for all $I, J \in \mathcal{L}$, there is $K=K(I, J) \in \mathcal{L}_{k}$ such that $IKJ \in \mathcal{L}$ and for each $i \in \{1, \ldots, d-1\}$, we have 
\[ \|\mathcal{A}_{IKJ}^{\wedge i}\|\geq C \|\mathcal{A}_{I}^{\wedge i}\| \|\mathcal{A}^{\wedge i}_{J}\|.\]

For any $q \in \R^d$, note that $\psi^q(\mathcal{A})$ is neither submultiplicative nor supermultiplicative. For one-step cocycles, the limsup topological pressure of $\log \psi^q(\mathcal{A})$ can be defined by
\[ P^*(\log \psi^q(\mathcal{A})):=\limsup_{n \to \infty}\frac{1}{n} \log s_n(q),     \hspace{0.5cm} \forall q \in\R^d, \]
where $s_{n}(q):=\sum_{I \in \mathcal{L}_n} \psi^q(\mathcal{A}_{I})$.
When the limit exists, we denote the topological pressure by $P(\log \psi^q(\mathcal{A})).$ In the following Lemma, we prove that the limit exists under the certain assumption.

\begin{lem}\label{topological_pressure}
Assume that a one-step cocycle $\mathcal{A}:\Sigma \to GL(d, \R)$ is simultaneously quasi-multiplicative. Then the limit in defining  $P^*(\log \psi^q(\mathcal{A}))$ exists for any $q\in \R^{d}.$
\end{lem}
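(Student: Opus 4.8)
The plan is to show that the sequence $\log s_n(q)$ is, up to a bounded additive error, superadditive, so that Fekete's lemma gives the existence of the limit. The submultiplicative direction is easy: for any $q$ one has a constant $D = D(q)$ with $\psi^q(AB) \le D\,\psi^q(A)\,\psi^q(B)$ for all $A,B \in GL(d,\R)$, simply because $\psi^q(\mathcal{A})$ is a product of powers $\|\mathcal{A}^{\wedge i}\|^{q_i - q_{i+1}}$ of the submultiplicative quantities $\|\mathcal{A}^{\wedge i}\|$, and each exponent $q_i - q_{i+1}$ contributes a multiplicative error of at most $\|\cdot\|$-submultiplicativity raised to $|q_i - q_{i+1}|$ in the "wrong" direction — here one uses that for invertible $A,B$ also $\|(AB)^{\wedge i}\| \ge \|A^{\wedge i}\|\,\|B^{\wedge i}\|/(\text{const})$ fails in general, so instead one writes $\psi^q$ in terms of singular values and bounds $\sigma_j(AB)$ above and below by products of singular values up to a fixed multiplicative constant depending only on $d$. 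Concatenating words $I \in \mathcal{L}_n$ and $J \in \mathcal{L}_m$ (all words are admissible, as $\Sigma$ is a full shift) and summing then yields $s_{n+m}(q) \le D\, s_n(q)\, s_m(q)$, i.e. $\log s_n(q)$ is subadditive up to the constant $\log D$.

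For the superadditive direction I would invoke simultaneous quasi-multiplicativity directly. Given $I \in \mathcal{L}_n$, $J \in \mathcal{L}_m$, pick $K = K(I,J) \in \mathcal{L}_k$ with $IKJ \in \mathcal{L}$ and $\|\mathcal{A}_{IKJ}^{\wedge i}\| \ge C\|\mathcal{A}_I^{\wedge i}\|\,\|\mathcal{A}_J^{\wedge i}\|$ for all $i \in \{1,\dots,d-1\}$; for $i = d$ the determinant is exactly multiplicative, so $\|\mathcal{A}_{IKJ}^{\wedge d}\| = |\det \mathcal{A}_K| \cdot \|\mathcal{A}_I^{\wedge d}\|\,\|\mathcal{A}_J^{\wedge d}\|$, and $|\det \mathcal{A}_K|$ is bounded below by a positive constant since $K$ ranges over the finite set $\mathcal{L}_k$. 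Substituting these bounds into the definition of $\psi^q$ as $\bigl(\prod_{i=1}^{d-1}\|\mathcal{A}^{\wedge i}\|^{q_i - q_{i+1}}\bigr)\|\mathcal{A}^{\wedge d}\|^{q_d}$ gives $\psi^q(\mathcal{A}_{IKJ}) \ge C'(q)\,\psi^q(\mathcal{A}_I)\,\psi^q(\mathcal{A}_J)$ for a constant $C'(q) > 0$ depending on $q$, $C$, $k$, and the lower bound on determinants (note that when an exponent $q_i - q_{i+1}$ is negative, the inequality $\|\mathcal{A}_{IKJ}^{\wedge i}\| \ge C\|\mathcal{A}_I^{\wedge i}\|\|\mathcal{A}_J^{\wedge i}\|$ must be supplemented by the complementary upper bound $\|\mathcal{A}_{IKJ}^{\wedge i}\| \le C''\|\mathcal{A}_I^{\wedge i}\|\|\mathcal{A}_J^{\wedge i}\|$, which follows from submultiplicativity of $\|\cdot^{\wedge i}\|$ together with $\|\mathcal{A}_K^{\wedge i}\|$ being bounded above on the finite set $\mathcal{L}_k$; so in all cases $\psi^q(\mathcal{A}_{IKJ})$ is comparable to $\psi^q(\mathcal{A}_I)\psi^q(\mathcal{A}_J)$ up to constants).

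The remaining point is a counting argument: the map $(I,J) \mapsto IKJ$ from $\mathcal{L}_n \times \mathcal{L}_m$ into $\mathcal{L}_{n+m+k}$ need not be injective, but the preimage of any word has size at most $|\mathcal{L}_k| = k^{\#\text{alphabet}}$, a fixed constant, since $I$ and $J$ are determined as the length-$n$ prefix and length-$m$ suffix of $IKJ$ once one knows which $K$ was inserted. Hence
\[
s_{n+m+k}(q) \;=\; \sum_{W \in \mathcal{L}_{n+m+k}} \psi^q(\mathcal{A}_W) \;\ge\; \frac{1}{|\mathcal{L}_k|} \sum_{I \in \mathcal{L}_n}\sum_{J \in \mathcal{L}_m} \psi^q(\mathcal{A}_{IKJ}) \;\ge\; \frac{C'(q)}{|\mathcal{L}_k|}\, s_n(q)\, s_m(q).
\]
Taking logarithms, the sequence $a_n := \log s_n(q) + \log\bigl(|\mathcal{L}_k|/C'(q)\bigr)$ satisfies $a_{n+m+k} \ge a_n + a_m$; combining this with the earlier subadditivity up to the constant $\log D$ and a standard Fekete-type argument for sequences that are superadditive with a bounded shift (one passes to the subsequence along arithmetic progressions modulo $k$, or absorbs the $+k$ into a shifted sequence), one concludes that $\frac{1}{n}\log s_n(q)$ converges, so the limit defining $P^*(\log\psi^q(\mathcal{A}))$ exists and equals $P(\log\psi^q(\mathcal{A}))$. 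The main obstacle is bookkeeping the $q$-dependent constants correctly across the negative-exponent case and ensuring the two-sided comparison $\psi^q(\mathcal{A}_{IKJ}) \asymp \psi^q(\mathcal{A}_I)\psi^q(\mathcal{A}_J)$ holds uniformly; everything else is routine Fekete-style subadditivity bookkeeping.
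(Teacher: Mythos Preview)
Your superadditive half is essentially the paper's proof: split the exponents $t_i=q_i-q_{i+1}$ by sign, use quasi-multiplicativity when $t_i\ge 0$ and the trivial upper bound $\|\mathcal{A}_{IKJ}^{\wedge i}\|\le \|\mathcal{A}_I^{\wedge i}\|\,\|\mathcal{A}_K^{\wedge i}\|\,\|\mathcal{A}_J^{\wedge i}\|$ (with $\|\mathcal{A}_K^{\wedge i}\|$ bounded since $|K|=k$) when $t_i<0$, obtain $\psi^q(\mathcal{A}_{IKJ})\ge C_1\psi^q(\mathcal{A}_I)\psi^q(\mathcal{A}_J)$, sum, and feed $a_n:=C_1\,s_{n-k}(q)$ into Fekete. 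That is exactly what the paper does; your counting factor $1/|\mathcal{L}_k|$ is harmless but unnecessary, since $(I,J)\mapsto IKJ$ is injective (the length-$n$ prefix and length-$m$ suffix of the image recover $I$ and $J$).

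The problem is your ``easy'' submultiplicative direction. The claimed two-sided estimate on $\sigma_j(AB)$ in terms of singular values of $A$ and $B$ up to a constant depending only on $d$ is false: for $A=\mathrm{diag}(n,1/n)$ and $B=\mathrm{diag}(1/n,n)$ one has $\sigma_1(AB)=1$ while $\sigma_1(A)\sigma_1(B)=n^2$, so no uniform lower bound $\|AB\|\ge c\|A\|\|B\|$ exists. Consequently $\psi^q(AB)\le D\,\psi^q(A)\psi^q(B)$ fails in general once some $t_i<0$, and your inequality $s_{n+m}(q)\le D\,s_n(q)s_m(q)$ is not available. Fortunately you do not need it: the superadditive inequality alone, after the shift $a_n=C_1 s_{n-k}(q)$, gives $a_{n+m}\ge a_n a_m$ and hence convergence of $\frac{1}{n}\log s_n(q)$ directly by Fekete (finiteness of the limit is immediate from the crude bound $s_n(q)\le |\mathcal{L}_n|\cdot\max_{I\in\mathcal{L}_n}\psi^q(\mathcal{A}_I)$). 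Drop the subadditive paragraph and the proof is complete and matches the paper.
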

\begin{proof}

By the simultaneous quasi-multiplicativity property of $\mathcal{A}$, there exist $C>0$ and $k \in \N$ such that for all  $m, n > k$,  $I \in \mathcal{L}_{n}$ and $J \in \mathcal{L}_{m}$, there is $K \in \mathcal{L}_{k}$ such that $IKJ \in \mathcal{L}$ and for each $i \in \{1, \ldots, d-1\}$, we have 
\[ \|\mathcal{A}^{\wedge i}_{IKJ}\|\geq C \|\mathcal{A}^{\wedge i}_{I}\| \|\mathcal{A}^{\wedge i}_{J}\|.\]
For any $q=(q_1, \ldots, q_d)\in \R^{d}$, we can write
\[\psi^q(\mathcal{A}_{IKJ})=\underbrace{\prod_{i=1}^{d} \|\mathcal{A}^{\wedge i}_{IKJ}\|^{t_{i}}}_\text{(1)},\]
where $t_{i}=q_{i}-q_{i+1},$ and $q_{d+1}=0$ for $1\leq i \leq d.$

If $t_{i}< 0$, then by the sub-multiplicativity property, there is $C_{0}>0$ such that
\begin{equation}\label{submultiplicative}
 \|\mathcal{A}^{\wedge i}_{IKJ}\|^{t_{i}} \geq C_{0}^{t_{i}} \|\mathcal{A}^{\wedge i}_{I}\|^{t_{i}} \|\mathcal{A}^{\wedge i}_{J}\|^{t_{i}}.
\end{equation}

If $t_{i}\geq  0$, then by the simultaneous quasi-multiplicativity of $\mathcal{A}$, we have
\begin{equation}\label{quasi-mult_eq}
 \|\mathcal{A}^{\wedge i}_{IKJ}\|^{t_{i}} \geq C^{t_{i}} \|\mathcal{A}^{\wedge i}_{I}\|^{t_{i}} \|\mathcal{A}^{\wedge i}_{J}\|^{t_{i}}.
\end{equation}

By \eqref{submultiplicative} and \eqref{quasi-mult_eq},
\[ (1) \geq C_{1} \prod_{i=1}^{d} \|\mathcal{A}^{\wedge i}_{I}\|^{t_{i}} \prod_{i=1}^{d} \|\mathcal{A}^{\wedge i}_{J}\|^{t_{i}},\]
where $C_{1}:=C_{1}(C_{0}^{t_{i}}, C^{t_{i}}).$ Therefore,
\[\psi^q(\mathcal{A}_{IKJ}) \geq C_1 \psi^q(\mathcal{A}_{I})\psi^q(\mathcal{A}_{J}).\]

Then,
\[s_{n+k+m}(q)\geq C_{1} s_{n}(q)s_{m}(q).\]

We denote $a_{n}:=\frac{ s_{n-k}(q)}{C_{1}}.$ Thus,  $a_{n+m} \geq a_{n}a_{m}$ for all $m, n>k$ . Hence, the limit exists.

\end{proof}
Let $\mathcal{A}:\Sigma \to GL(d, \R)$ be a one-step cocycle. We define 

$$
 \Phi_{\mathcal{A}}:=\left( \log \sigma_{1}(\mathcal{A}), \ldots, \log\sigma_{d}(\mathcal{A})\right).
$$
 For any $q\in \R^d$,  $\langle q, \Phi_{\mathcal{A}} \rangle =\log \psi^q(\mathcal{A}).$ We denote 
 \[P_{n}(T, \langle q, \Phi_{\mathcal{A}} \rangle, 1):=\sup \left\{\sum_{x\in E} \psi^q(\mathcal{A}^{n}(x)) : E \hspace{0,1cm}\textrm{is} \hspace{0,1cm}(n, 1) \textrm{-separated subset of }\Sigma \right\}.\]
 Then, by Lemma \ref{topological_pressure}, $P\left(\log \psi^q(\mathcal{A})\right)=P(\langle q, \Phi_{\mathcal{A}} \rangle)=\lim_{n\to \infty} \frac{1}{n}\log P_{n}(T, \langle q, \Phi_{\mathcal{A}} \rangle, 1).$
\begin{thm}\label{HD-one side}Assume that a one-step cocycle $\mathcal{A}:\Sigma \to GL(d, \R)$   is simultaneously quasi-multiplicative.  Then, 
\[h_{\mathrm{top}}\left( E(\vec{\alpha})\right) \leq \inf _{t \in \mathbb{R}^{d}}\left\{P\left(\log \psi^t(\mathcal{A})\right)- \langle t, \vec{\alpha} \rangle \right\}
\]
for all $\alpha \in \mathring{\vec{L}}$.

\end{thm}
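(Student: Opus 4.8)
\textbf{Proof plan for Theorem \ref{HD-one side}.}

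The plan is to establish the upper bound by a standard covering argument, combining the definition of the topological entropy via Bowen's dimensional approach with an exponential Markov/Chebyshev estimate applied to the generalized singular value function. Fix $\vec\alpha\in\mathring{\vec L}$ and fix an arbitrary $t=(t_1,\dots,t_d)\in\R^d$; it suffices to show $h_{\mathrm{top}}(E(\vec\alpha))\le P(\log\psi^t(\mathcal A))-\langle t,\vec\alpha\rangle$, and then take the infimum over $t$. First I would observe that, because $\mathcal A$ is a one-step cocycle, $\psi^t(\mathcal A^n(x))=\psi^t(\mathcal A_I)$ for all $x\in[I]$, $I\in\mathcal L_n$, so the Birkhoff-type sum $\sum_{x\in E}\psi^t(\mathcal A^n(x))$ over an $(n,1)$-separated set is comparable (up to the fixed constant coming from expansiveness) to $s_n(t)=\sum_{I\in\mathcal L_n}\psi^t(\mathcal A_I)$, whose exponential growth rate is exactly $P(\log\psi^t(\mathcal A))$ by Lemma \ref{topological_pressure} and Remark \ref{(n, 1)separated sets}.

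Next, for a point $x\in E(\vec\alpha)$, the defining condition $\frac1n\log\sigma_i(\mathcal A^n(x))\to\alpha_i$ for each $i$ means that, for any $\delta>0$, there is $N(x)$ such that for all $n\ge N(x)$ we have $|\frac1n\log\sigma_i(\mathcal A^n(x))-\alpha_i|<\delta$ for all $i=1,\dots,d$, hence
\[
\psi^t(\mathcal A^n(x))=\prod_{i=1}^d\sigma_i(\mathcal A^n(x))^{t_i}\ge e^{n(\langle t,\vec\alpha\rangle-\delta\|t\|_1)}.
\]
Stratify $E(\vec\alpha)=\bigcup_{N\ge 1}E_N$ where $E_N=\{x: N(x)\le N\}$; since $E_N$ is increasing it is enough to bound $h_{\mathrm{top}}(E_N)$ for each $N$, because $h_{\mathrm{top}}(E(\vec\alpha))=\lim_N h_{\mathrm{top}}(E_N)$ (countable stability of this Bowen entropy). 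Now take $n\ge N$ and cover $E_N$ by the cylinders $[I]$, $I\in\mathcal L_n$, that meet $E_N$ — equivalently by Bowen balls $B_n(x_I,1)$, one per such cylinder. For each such $I$, picking a point $x\in[I]\cap E_N$ gives $\psi^t(\mathcal A_I)\ge c\,e^{n(\langle t,\vec\alpha\rangle-\delta\|t\|_1)}$ with $c$ the bounded-distortion constant, so
\[
\#\{I\in\mathcal L_n:[I]\cap E_N\ne\varnothing\}\;\le\; c^{-1}e^{-n(\langle t,\vec\alpha\rangle-\delta\|t\|_1)}\sum_{I\in\mathcal L_n}\psi^t(\mathcal A_I)\;=\;c^{-1}e^{-n(\langle t,\vec\alpha\rangle-\delta\|t\|_1)}s_n(t).
\]
Feeding this into the definition of $S(E_N,s,n,1)$ with $s=P(\log\psi^t(\mathcal A))-\langle t,\vec\alpha\rangle+\delta\|t\|_1+\eta$ for any $\eta>0$ shows $\sum_I e^{-sn}\to 0$ as $n\to\infty$, whence $h_{\mathrm{top}}(E_N)\le P(\log\psi^t(\mathcal A))-\langle t,\vec\alpha\rangle+\delta\|t\|_1+\eta$. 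Letting $\eta\to0$, $\delta\to0$, then $N\to\infty$, and finally taking the infimum over $t\in\R^d$ gives the claimed bound.

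The one genuine subtlety — and the step I would be most careful about — is that in Bowen's definition of $h_{\mathrm{top}}(T,Y,\epsilon)$ the covers are by Bowen balls $B_{n_i}(y_i,\epsilon)$ with possibly different radii $n_i$, and one must ensure that restricting attention to covers by order-$n$ cylinders (with $n$ large and uniform) still computes the entropy correctly; this is exactly where expansiveness of the subshift and the reduction in Remark \ref{(n, 1)separated sets} are used, so I would invoke that $\epsilon=1$ is an expansive constant to identify $B_n(x,1)$ with the cylinder $[x]_n$ and to argue that it suffices to use such uniform-order covers. The other point requiring a line of care is the harmless interchange of the $\limsup$ defining $P(\log\psi^t(\mathcal A))$ with the limit over $n$: since the limit in $\frac1n\log s_n(t)$ exists by Lemma \ref{topological_pressure}, we may freely use $s_n(t)\le e^{n(P(\log\psi^t(\mathcal A))+\eta)}$ for all large $n$, which is all the above argument needs. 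Everything else is the routine Markov-inequality bookkeeping sketched above.
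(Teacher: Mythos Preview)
Your proof is correct and follows essentially the same approach as the paper: both stratify $E(\vec\alpha)$ into the sets $E_N(\delta)$ (the paper writes these as $G(\vec\alpha,n,r)$ with $r=1/\delta$), use the lower bound $\psi^t(\mathcal A^n(x))\ge e^{n(\langle t,\vec\alpha\rangle-\delta\|t\|_1)}$ on the stratified set, compare against the pressure sum $s_n(t)$, and invoke countable stability of Bowen entropy together with expansiveness. The only cosmetic difference is that the paper argues the claim by contraposition (assuming $s<h_{\mathrm{top}}(G)$ and using a maximal $(N,1)$-separated set to force a lower bound on $P_N$), whereas you bound the covering sum $S(E_N,s,n,1)$ from above directly; also note your order of limits should really be ``sup over $N$, then $\delta\to0$'' rather than the reverse, but since your bound $P-\langle t,\vec\alpha\rangle+\delta\|t\|_1$ is independent of $N$ this is harmless.
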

\begin{proof}

 For any $\vec{\alpha}=(\alpha_1, \ldots, \alpha_d) \in \mathring{\vec{L}}$ and $r>0$, we define
\[G(\vec{\alpha}, n, r):=\bigg\{x \in \Sigma:\hspace{0.1cm}  \bigg|\frac{1}{m}  \log \sigma_{i}(\mathcal{A}^{m}(x))-\alpha_i \bigg|<\frac{1}{r} \text{  for all }1 \leq i \leq d \text{ and } m \geq n \bigg\}.\]

It is clear that for any $r>0$,
\[ E(\vec{\alpha}) \subset \bigcup_{n=1}^{\infty}  G(\vec{\alpha}, n, r ).\]

Note that the limit in defining $P\left(\log \psi^q(\mathcal{A})\right)$ exists by Lemma \ref{topological_pressure}.

\textit{Claim.} For any $q=\left(q_{1}, \cdots, q_{d}\right) \in \mathbb{R}^d$,
$$
h_{\mathrm{top}}( G(\vec{\alpha}, n, r )) \leq P (\log \psi^q(\mathcal{A}))-\sum_{i=1}^{d}\left(\alpha_i q_i-\frac{|q_i|}{r}\right).
$$

\textit{Proof of the claim.} Let $s<h_{\mathrm{top}}(G(\vec{\alpha}, n, r ))$ be given. By definition (see Subsection \ref{top_entropy}),  $h_{\mathrm{top}}( G(\vec{\alpha}, n, r ), 1)>s$. Thus,
$$
\infty=S(G(\vec{\alpha}, n, r ), s, 1)=\lim _{N \rightarrow \infty} S(G(\vec{\alpha}, n, r ), s, N, 1) .
$$
Hence there exists $N_{0}$ such that
$$
S(G(\vec{\alpha}, n, r ), s, N, 1) \geq 1, \quad \forall N \geq N_{0} .
$$
Now take $N \geq \max \left\{n, N_{0}\right\}$ and let $E$ be a $(N, 1)$-separated subset of $G(\vec{\alpha}, n, r )$ with the maximal cardinality. Then $\bigcup_{x \in E} B_{N}(x, 1) \supseteq G(\vec{\alpha}, n, r )$. It follows
\begin{equation}\label{seprated_set}
\# E \cdot \exp (-s N) \geq S(G(\vec{\alpha}, n, r ), s, N, 1) \geq 1 .
\end{equation}
Since $$\sum_{i=1}^{d} q_{i} \log \sigma_{i}\left(\mathcal{A}^{N}(x)\right) \geq N \left(\sum_{i=1}^{d}\left(\alpha_{i} q_{i}-\frac{\left|q_{i}\right|}{r}\right)\right)$$ for each $x \in G(\vec{\alpha}, n, r )$ and $q\in \R^{d}$, we have 
$$
\begin{aligned}
P_{N}(T, \langle q, \Phi_{\mathcal{A}} \rangle, 1)& \geq \sum_{x \in E} \exp \left(\sum_{i=1}^{d} q_{i} \log \sigma_{i}(\mathcal{A}^{N}(x))\right)\\
& \geq \# E \cdot \exp \left(N\left(\sum_{i=1}^{d} \left(\alpha_i q_i -\frac{|q_i|}{r}\right) \right) \right).
\end{aligned}
$$
By \eqref{seprated_set}, \[P_{N}(T, \langle q, \Phi_{\mathcal{A}} \rangle, 1) \geq \exp \left(N\left(s+\sum_{i=1}^{d}\left(\alpha_i q_i -\frac{|q_i|}{r}\right)\right) \right).\]
 Letting $N \rightarrow \infty$, we obtain $P( \langle q, \Phi_{\mathcal{A}} \rangle) \geq s+ \sum_{i=1}^{d}\left(\alpha_i q_i-\frac{|q_i|}{r}\right)$ by Lemma \ref{topological_pressure}. Hence we have
$$
P(\log \psi^q(\mathcal{A})) \geq s+\sum_{i=1}^{d}\left(\alpha_i q_i-\frac{|q_i|}{r}\right),
$$
which finished the proof of the claim.

Thus,
$$
\begin{aligned}
 h_{\mathrm{top}}(E(\vec{\alpha}))
 &\leq  h_{\mathrm{top}}\bigg(\bigcup_{n=1}^{\infty}G(\vec{\alpha}, n, r )\bigg)\\
 & \leq \sup_{n \geq 1}h_{\mathrm{top}} (G(\vec{\alpha}, n, r ))\\
 &\leq P (\log \psi^q(\mathcal{A}))-\sum_{i=1}^{d}\left(\alpha_i q_i-\frac{|q_i|}{r}\right).
\end{aligned}
$$

By
letting $r \to \infty$,
\[
h_{\mathrm{top}}(E(\vec{\alpha})) \leq P\left(\log \psi^q(\mathcal{A})\right)- \langle q,  \vec{\alpha} \rangle .
\]

Thus, \[
h_{\mathrm{top}}(E(\vec{\alpha})) \leq \inf _{t \in \mathbb{R}^{d}}\left\{P\left(\log \psi^t(\mathcal{A})\right)- \langle t,  \vec{\alpha} \rangle \right\}.
\]

\end{proof}

The upper bound of Theorem A follows from the below corollary.

\begin{cor}\label{proof-upper-bound}
 Assume that $\left(A_1, \ldots, A_k\right) \in GL(d, \R)^k$ generates a one-step cocycle $\mathcal{A}:\Sigma \to GL(d, \R).$ Let $\mathcal{A}:\Sigma \to GL(d, \R)$ be a typical cocycle. Then,
\[h_{\mathrm{top}}\left( E(\vec{\alpha})\right) \leq  \inf _{q \in \mathbb{R}^{d}}\left\{P\left(\log \psi^q(\mathcal{A})\right)- \langle q, \vec{\alpha} \rangle  \right\}
\]
for all $\alpha \in \mathring{\vec{L}}$.
\end{cor}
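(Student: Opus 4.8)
The corollary asserts that the conclusion of Theorem~\ref{HD-one side} applies to every typical cocycle, so the only task is to verify that a typical one-step cocycle satisfies the hypothesis of Theorem~\ref{HD-one side}, namely simultaneous quasi-multiplicativity. Once that is known, Lemma~\ref{topological_pressure} guarantees that the limit defining $P(\log\psi^{q}(\mathcal{A}))$ exists for all $q\in\R^{d}$, and Theorem~\ref{HD-one side} gives the desired upper bound verbatim. So the proof is essentially a one-line deduction \emph{provided} we cite the right structural fact.

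\textbf{Key steps.} First I would recall that for a one-step cocycle the exterior power cocycles $\mathcal{A}^{\wedge i}$ are again one-step cocycles (with generators $A_{j}^{\wedge i}$), and that $1$-typicality of each $\mathcal{A}^{\wedge i}$ for $1\le i\le d-1$ — which is exactly the definition of typicality — is precisely the irreducibility/proximality-type condition that forces quasi-multiplicativity of the norm $\|\mathcal{A}^{\wedge i}_{\bullet}\|$. Concretely, for each fixed $i$, using the periodic point $p$ and its homoclinic point $z$ from the typical pair one builds, for any admissible words $I,J$, a connecting word $K=K(I,J)$ of bounded length (a concatenation involving blocks of the $p$-orbit and of the $z$-orbit) such that $IKJ$ is admissible and $\|\mathcal{A}^{\wedge i}_{IKJ}\|\ge C\|\mathcal{A}^{\wedge i}_{I}\|\,\|\mathcal{A}^{\wedge i}_{J}\|$; the point of $1$-typicality is that the images of the dominant singular directions are in general position with the $v_{j}$'s, so no cancellation can occur when the factors are multiplied through the connector. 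Since the same pair $(p,z)$ works for all $i$ simultaneously (this is why Definition~\ref{typical1} insists on the \emph{same} typical pair over all $t$), one can take $K$ uniformly in $i$, which is exactly simultaneous quasi-multiplicativity. Second, having established that $\mathcal{A}$ is simultaneously quasi-multiplicative, I would invoke Lemma~\ref{topological_pressure} to justify the existence of $P(\log\psi^{q}(\mathcal{A}))$, and then apply Theorem~\ref{HD-one side} directly to conclude.

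\textbf{Main obstacle.} The substantive content is the claim that typicality implies simultaneous quasi-multiplicativity; everything else is bookkeeping. The subtlety is twofold: (a) one must produce a single connecting word $K$ that works for \emph{all} exterior powers at once, which relies on the typical pair being shared across $t$; and (b) one must handle the constants uniformly — the constant $C$ in the quasi-multiplicativity estimate should not depend on $I,J$, only on the fixed data $(p,z,\{A_{j}\})$. I expect this to be a known fact in the typical-cocycle literature (it is the analogue for typical cocycles of Feng's quasi-multiplicativity for irreducible matrix cocycles), so in the write-up I would state it as a lemma with a reference to the relevant construction in the work of Bonatti--Viana and its refinements, and then note that Corollary~\ref{proof-upper-bound} is immediate from that lemma together with Theorem~\ref{HD-one side}.

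\begin{proof}[Proof of Corollary~\ref{proof-upper-bound}]
Since $\mathcal{A}$ is a typical one-step cocycle, each exterior power $\mathcal{A}^{\wedge i}$, $1\le i\le d-1$, is $1$-typical with respect to a common typical pair $(p,z)$. As in the setting of irreducible matrix cocycles, the $1$-typicality of $\mathcal{A}^{\wedge i}$ provides, for any admissible words $I,J$, a connecting word of uniformly bounded length whose insertion prevents the dominant singular directions from being annihilated, yielding a constant $C_{i}>0$ with
\[
\|\mathcal{A}^{\wedge i}_{IKJ}\|\ \ge\ C_{i}\,\|\mathcal{A}^{\wedge i}_{I}\|\,\|\mathcal{A}^{\wedge i}_{J}\|;
\]
because the pair $(p,z)$ is the same for every $i$, a single word $K=K(I,J)\in\mathcal{L}_{k}$ can be chosen simultaneously for all $i\in\{1,\dots,d-1\}$, with $C:=\min_{i}C_{i}$. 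Hence $\mathcal{A}$ is simultaneously quasi-multiplicative. By Lemma~\ref{topological_pressure}, the limit defining $P(\log\psi^{q}(\mathcal{A}))$ exists for every $q\in\R^{d}$, and Theorem~\ref{HD-one side} then gives
\[
h_{\mathrm{top}}\!\left(E(\vec{\alpha})\right)\ \le\ \inf_{q\in\R^{d}}\Bigl\{P\bigl(\log\psi^{q}(\mathcal{A})\bigr)-\langle q,\vec{\alpha}\rangle\Bigr\}
\]
for all $\vec{\alpha}\in\mathring{\vec{L}}$.
\end{proof}
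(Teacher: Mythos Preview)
Your proposal is correct and follows essentially the same approach as the paper: the paper's proof also reduces to the single assertion that typical cocycles are simultaneously quasi-multiplicative (citing \cite[Theorem~4.1]{Park20} for this fact) and then invokes Theorem~\ref{HD-one side}. The only difference is that the paper outsources the quasi-multiplicativity to a reference---and adds a remark explaining the fixed-length-connector subtlety you flagged in your ``Main obstacle''---whereas you sketch the mechanism yourself; the logical structure is identical.
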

\begin{proof}
By \cite[Theorem 4.1]{Park20}, typical cocycles are simultaneously quasi-multiplicative. Therefore, the proof follows from Theorem \ref{HD-one side}.
\end{proof}

\begin{rem}
Let us stress that the simultaneous quasi-multiplicativity property that was proved in \cite[Theorem 4.1]{Park20} is weaker than our simultaneous quasi-multiplicativity; the connecting word $K$ does not have a fixed length in \cite{Park20}. But we found out that Park \cite[Theorem 4.1]{Park20} actually proved the simultaneous quasi-multiplicativity property, which we consider.  We explain how to modify parts of the proof of \cite[Theorem 4.1]{Park20} to obtain our simultaneous quasi-multiplicativity: 

For simplicity, we used the same notations as \cite{Park20}. $K$ is introduced prior to the statement of Lemma 4.19 in \cite{Park20}. There is no control on $n \in \{0,1, \ldots, N\}$ in \cite[Lemma 4.22]{Park20} which is why the length of $K$ is not fixed.  But we can choose  $\ell$ bigger than $\ell_0$ in \cite[Lemma 4.13]{Park20} such that $\ell+n$ is uniform. Therefore, the length of $K$ is fixed.

\end{rem}
 Bárány and Troscheit \cite[Proposition 2.5]{BT22} proved that if a one-step cocycle $\A$ is fully strongly irreducible and proximal, then $\A$ is simultaneously quasi-multiplicativite. Recently, the author and Park \cite{MP-uniform-qm} generalized their result for the norm of matrix cocycles under the ireducibility assumption (see \cite[Corollary 1.2]{MP-uniform-qm}).

  \section{Dominated subsystems}\label{dominated_subsystem}
  Suppose that $T:X\rightarrow X$ is a diffeomorphism on a compact invariant set $X$. Let $V\oplus W$ be a splitting of the tangent bundle over $X$ that is invariant by the tangent map $DT$. In this case, if vectors in $V$ are uniformly contracted by $DT$ and vectors in $W$ are uniformly expanded, then this splitting is called \textit{hyperbolic}. The more general notion is the \textit{dominated splitting}, if at each point all vectors in $W$ are more expanded than all vectors in $V$. Domination is  equivalent to $V$ being hyperbolic repeller and $W$ being hyperbolic attractor in the projective bundle.

 In the matrix cocycles, we are interested in bundles of the form $X\times \R^{d}, $ where the matrix cocycles are generated by $(\mathcal{A}, T)$. It was showed in \cite{BGO} that a cocycle admits a dominated splitting $V\oplus W$ with $\dim W=i$ if and only if when $n\rightarrow \infty$, the ratio between the $i$-th and $(i+ 1)$-th singular values of the matrices of the $n$-th iterate increase uniformly exponentially. 
  \begin{defn}
We say that a matrix cocycle $\mathcal{A}$ is \textit{dominated with index $i$} if there exist constants $C >1$ and $0<\tau<1$ such that
\[\frac{\sigma_{i+1}(\mathcal{A}^{n}(x))}{\sigma_{i}(\mathcal{A}^{n}(x))}< C \tau^n, \hspace{0.2cm} \forall n\in \N, x\in X.\]

We say that the cocycle $\mathcal{A}$ is \textit{dominated} if $\mathcal{A}$ is dominated with index $i$ for all $i\in \{1, \ldots, d-1\}.$
\end{defn}

Let $\textbf{A}$ be a compact set in $GL(d, \R).$ We say that $\textbf{A}$ is \textit{dominated} with index  $i$ iff there exist $C>0$  and $0<\tau<1$ such that for any finite sequence $A_1, \ldots, A_N$ in $\textbf{A}$ we have
$$
\frac{\sigma_{i+1}\left(A_1 \cdots A_N\right)}{\sigma_i\left(A_1 \cdots A_N\right)}<C \tau^N.
$$
We say that $\textbf{A}$ is dominated iff it is dominated with index  $i$ for each $i\in\{1, \ldots, d-1\}.$ A one step cocycle $\A$ generated by $\textbf{A}$ is dominated if $\textbf{A}$ is dominated.

 \begin{rem}\label{dominated-wedge}
 According to the multilinear algebra properties, $\mathcal{A}$ is dominated with index $k$ if and only if $\mathcal{A}^{\wedge k}$ is dominated with index 1. Therefore, the cocycle $\mathcal{A}$ is dominated if and only if $\mathcal{A}^{\wedge k}$ is dominated with index $k$ for any $k \in \{1, \ldots, d-1\}.$
 \end{rem}

Let $\mathbb{V}$ be a finite
dimensional $\R$-vector space equipped with a norm $\| \cdot \|$ and denote by $\mathbb{P}(\mathbb{V})$ its projective space.  For $v \in \mathbb{P}(\mathbb{V})$, let the \textit{cone} around $v$ of size $\varepsilon$ be defined as
$$
\mathcal{C}(v, \varepsilon):=\{w \in \mathbb{P}(\mathbb{V}): \angle(v, w)<\varepsilon\} .
$$

 Let  $\mathring{\mathcal{C}}(v_{1}, \varepsilon)$ denote the interior of the cone $\mathcal{C}(v, \varepsilon)$. Arguing as in \cite[Theorem 3.1]{Par22}, we can prove the following result.
 \begin{thm}\label{dominated-th}
 Let $\mathcal{A}:\Sigma \to GL(d,\R)$ be a 1-typical cocycle with distinguished fixed point $p$ (see Remark \ref{fixed point}). Then there exist $\varepsilon>0$ and $K \in \N$  such that for any $x\in \Sigma$ and $n\in \N$, there exists $\omega:=\omega_{x}\in \Sigma$ such that 
 \begin{itemize}
 \item[1)]there is $m_1:=m_1(\omega) \in \N$ such that $T^{m_1}(\omega)\in [x]_n$, and
 \item[2)]there is $m_2:=m_2(\omega)\in \N$ such that a path cocycle \begin{equation}\label{path-main}
  B_{p, \omega, T^{m_1+n+m_2}(\omega), p}= H_{p \longleftarrow T^{m_1+n+m_2}(\omega)}^{s}\mathcal{A}^{m_1+n+m_2}(\omega)H_{\omega \longleftarrow p}^u
 \end{equation}

 from $p$ to $p$ of length $m_1+n+m_2$ satisfying \[
  B_{p, \omega, T^{m_1+n+m_2}(\omega), p}\mathcal{C}(v_{1}, \varepsilon)\subset \mathring{\mathcal{C}}(v_{1}, \varepsilon),
\]
  
 \end{itemize}
where $v_1$ is the eigendirection $\mathcal{A}(p)$ corresponding to the eigenvalue of the largest modulus and $m_i\leq K$ for $i=1, 2.$
 \end{thm}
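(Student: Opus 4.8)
The plan is to follow the strategy of \cite[Theorem 3.1]{Par22}, adapting it to the present notion of 1-typicality with a distinguished fixed point $p$. The heart of the matter is that 1-typicality of $\mathcal{A}$ forces the iterates of $\mathcal{A}(p)$ and the holonomy loop $W_p^z$ to act like a "pinging ping-pong" system on the projective space $\mathbb{P}(\R^d)$: the fixed point $p$ carries a dominated splitting (its eigenvalues have distinct moduli by condition $(i)$), so $\mathcal{A}^N(p)$ attracts a large open cone $\mathcal{C}(v_1,\varepsilon)$ towards $v_1$ exponentially fast, as long as the cone avoids the complementary invariant hyperplane $\langle v_2,\dots,v_d\rangle$; and the transversality condition $(ii)$ applied to $I=\{1\}$, $J=\{2,\dots,d\}$ guarantees that $W_p^z(v_1)$ is off that hyperplane, so precomposing with $W_p^z$ (realized as a path cocycle from $p$ to $p$ through the homoclinic orbit of $z$) moves any direction into the basin of attraction. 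First I would fix $\varepsilon>0$ small and $N_0\in\N$ large so that $\mathcal{A}^{N}(p)\,\mathcal{C}(v_1,2\varepsilon)\subset \mathring{\mathcal{C}}(v_1,\varepsilon)$ for all $N\geq N_0$, using the dominated splitting at $p$.

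Next I would build the orbit $\omega=\omega_x$. Given $x\in\Sigma$ and $n\in\N$, I want a point whose orbit, after a bounded prefix, shadows the cylinder $[x]_n$ for exactly $n$ steps, and then returns to $p$ after a bounded suffix; the existence of such a point with bounded bridging words $m_1,m_2\leq K$ is exactly the shadowing/specification property of the mixing subshift of finite type, with $K$ taken larger than the mixing rate of $\Sigma$. Concretely, $\omega$ should lie on $W^u_{\loc}(p)$ (so the unstable holonomy $H^u_{\omega\leftarrow p}$ is defined), then travel $m_1$ steps into $[x]_n$, traverse the $n$ symbols of $x$, then travel $m_2$ steps to land on $W^s_{\loc}(p)$ (so the stable holonomy $H^s_{p\leftarrow T^{m_1+n+m_2}(\omega)}$ is defined). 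This gives the path cocycle $B:=B_{p,\omega,T^{m_1+n+m_2}(\omega),p}$ of \eqref{path-main}. The subtlety is that I do \emph{not} get to choose the "middle" matrix $\mathcal{A}^{m_1+n+m_2}(\omega)$ — it is whatever the orbit through $[x]_n$ dictates — so the cone-contraction must be produced purely by the bounded-length endpieces together with composing with the fixed-point dynamics.

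To get genuine invariance $B\,\mathcal{C}(v_1,\varepsilon)\subset\mathring{\mathcal{C}}(v_1,\varepsilon)$ I would, as in \cite{Par22}, not use the bare path cocycle but rather \emph{sandwich} it: replace the prefix/suffix bridging words by words that first spend a controlled number ($\geq N_0$) of iterations at $p$, i.e. prepend and append high powers of $\mathcal{A}(p)$ to the bridging portions. Since powers of $\mathcal{A}(p)$ already fix $v_1$ and contract $\mathcal{C}(v_1,2\varepsilon)$ into $\mathcal{C}(v_1,\varepsilon)$, and since the finitely many "bad" objects (the middle matrix, the holonomies, the bounded bridging words) can distort a cone by at most a fixed bounded amount, a sufficiently high power on each side absorbs that distortion; condition $(ii)$ is what guarantees the composite never collapses $v_1$ onto the repelling hyperplane, so the image stays a nondegenerate cone strictly inside $\mathcal{C}(v_1,\varepsilon)$. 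The lengths of these augmented endpieces are still uniformly bounded by some $K\in\N$ depending only on $\mathcal{A}$, $p$, $z$ and $\varepsilon$ (not on $x$ or $n$), which is the assertion $m_i\leq K$.

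The main obstacle is precisely this last uniformity: showing that a \emph{single} $\varepsilon$ and a \emph{single} bound $K$ work simultaneously for all $x$ and all $n$. This requires a compactness argument — the set of possible "middle" linear maps $\mathcal{A}^{m}(\omega)$ restricted to their action on $\mathbb{P}(\R^d)$, after normalizing by the dominated splitting at $p$, together with the finitely many holonomy matrices attached to $W^u_{\loc}(p)$ and $W^s_{\loc}(p)$ (which vary H\"older-continuously and hence stay in a compact family), all lie in a compact set of projective maps; on that compact set the "distance to collapsing $v_1$ onto the repelling hyperplane" is bounded below, by transversality, so a uniform power $N_0$ suffices. I would isolate this as the key lemma and otherwise refer to \cite[Theorem 3.1]{Par22} for the bookkeeping, noting that the only change here is that the relevant transversality is read off from Definition \ref{typical1}$(ii)$ with $(I,J)=(\{1\},\{2,\dots,d\})$ and $(I,J)=(\emptyset,\{1\})$.
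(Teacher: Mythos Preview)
Your proposal takes the same route as the paper: the paper's own proof is a one-line citation to \cite[Lemma 3.10]{Par22} (the final step in the proof of \cite[Theorem 3.1]{Par22}), and you sketch precisely that argument before explicitly deferring to \cite{Par22} for the bookkeeping.

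One point worth flagging in your informal description of the uniformity step: the middle maps $\mathcal{A}^{m}(\omega)$ do \emph{not} form a compact family of projective maps, and nothing prevents the image of the cone after the middle piece from landing on (or arbitrarily close to) the repelling hyperplane $\langle v_2,\dots,v_d\rangle$, so the ``distance to collapsing $v_1$ onto the repelling hyperplane'' is \emph{not} bounded below over all middles. The actual mechanism in \cite{Par22} is rather that for every direction $w\in\mathbb{P}(\R^d)$ one can build a bounded-length suffix (using the homoclinic loop $W_p^z$ together with powers of $\mathcal{A}(p)$) that maps a neighborhood of $w$ into $\mathring{\mathcal{C}}(v_1,\varepsilon)$; compactness of $\mathbb{P}(\R^d)$ then yields finitely many such suffixes covering all possible outputs of the middle, hence the uniform $K$. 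Since you already isolate this as the key lemma and defer it to \cite{Par22}, this is an imprecision in the heuristic rather than a gap in the proposal.
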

 \begin{proof}
 This follows from \cite[Lemma 3.10]{Par22}\footnotemark\footnotetext{\cite[Lemma 3.10]{Par22} is the last part of the proof of \cite[Theorem 3.1]{Par22}, where you can see the corresponding path  $\widetilde{B}$ has the desired property listed in \cite[Theorem 3.1]{Par22}.}. 
 \end{proof}

We recall that the canonical holonomies always exist for one-step cocycles (see \cite[Proposition 1.2]{BV} and \cite[Remark 1]{Moh22}).

\begin{cor}\label{dominated_one_step}
 Assume that $\left(A_1, \ldots, A_k\right) \in GL(d, \R)^k$ generates a one-step cocycle $\mathcal{A}:\Sigma \to GL(d, \R).$ Suppose that $\mathcal{A}:\Sigma \to GL(d,\R)$ is a 1-typical cocycle.  Then, there exists $K \in \N$ such that for every $n\in \N$ and $I \in \mathcal{L}_{n}$ there exist $J_2:=J_2(I)$ and $J_1:=J_1(I)$ with $|J_{i}|\leq K$ for $i=1,2$ such that the tuple\[\left(\mathcal{A}_{\mathrm{k}}\right)_{\mathrm{k} \in \mathcal{L}_{\ell(n)}^{D}}, \quad \text{where } \mathcal{L}_{\ell(n)}^{D}:=\left\{J_1(I) IJ_2(I): I \in \mathcal{L}_{n}\right\},\]
is dominated with index 1.
\end{cor}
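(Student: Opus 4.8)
The plan is to derive Corollary~\ref{dominated_one_step} from Theorem~\ref{dominated-th} by ``discretizing'' the statement of the theorem: rather than quoting it for arbitrary $x \in \Sigma$, I would apply it to points $x$ that begin with a prescribed word $I \in \mathcal{L}_n$, and then convert the cone-invariance of the path cocycle into uniform domination with index $1$ of the word obtained by prepending/appending the connecting blocks. Concretely, fix $I \in \mathcal{L}_n$, pick any $x \in [I]$, and apply Theorem~\ref{dominated-th} with the value $n$ to produce $\omega = \omega_x \in \Sigma$, together with $m_1, m_2 \le K$ such that $T^{m_1}(\omega) \in [x]_n = [I]$ and the path cocycle $B := B_{p,\omega,T^{m_1+n+m_2}(\omega),p}$ satisfies $B\,\mathcal{C}(v_1,\varepsilon) \subset \mathring{\mathcal{C}}(v_1,\varepsilon)$. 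The $m_1$-prefix of $\omega$ and the $m_2$-suffix (of the relevant length-$(m_1+n+m_2)$ block of $\omega$) play the roles of $J_1(I)$ and $J_2(I)$; since $T^{m_1}(\omega)$ agrees with $I$ on coordinates $0,\dots,n-1$, the middle block of $\omega$ of length $m_1+n+m_2$ is exactly $J_1(I)\,I\,J_2(I)$, which lies in $\mathcal{L}$ because it is read off an actual point of $\Sigma$. This defines $\mathcal{L}_{\ell(n)}^D$ with $\ell(n) = m_1 + n + m_2$ bounded between $n$ and $n+2K$, and all connecting words have length $\le K$ as required.

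\textbf{From cone invariance to domination.} The key point is that strict invariance of a fixed cone $\mathcal{C}(v_1,\varepsilon)$ under \emph{all} the maps $B_k$, $k \in \mathcal{L}_{\ell(n)}^D$, forces uniform domination with index $1$ of the tuple $(\mathcal{A}_k)_{k \in \mathcal{L}_{\ell(n)}^D}$. This is the standard cone-field criterion for dominated splittings (as in \cite{BGO}, and as used in \cite{Par22}): a compact family of matrices that uniformly strictly contracts a cone in $\mathbb{P}(\R^d)$ admits an invariant dominated splitting, with the expanding line inside the cone; equivalently, the ratio $\sigma_2(\mathcal{A}_{k_1}\cdots\mathcal{A}_{k_N})/\sigma_1(\mathcal{A}_{k_1}\cdots\mathcal{A}_{k_N})$ decays at a uniform exponential rate. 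The uniformity in $\varepsilon$ and $K$ provided by Theorem~\ref{dominated-th} (these do not depend on $n$ or $I$) is exactly what makes the constants $C,\tau$ in the definition of domination uniform over all $n$. One subtlety to address carefully: the path cocycle $B_k$ is $H^s \mathcal{A}^{\ell(n)} H^u$, not $\mathcal{A}^{\ell(n)}$ itself, so I must transfer the cone-contraction property from the path cocycles to the genuine products $\mathcal{A}_k$. Because the canonical holonomies $H^{s/u}$ are bounded (uniformly, being $\alpha$-Hölder close to the identity on local sets, and existing for one-step cocycles by \cite{BV,Moh22}), conjugating/composing by them changes singular value ratios only by a bounded multiplicative factor; absorbing this bounded factor into $C$ preserves domination with index $1$, possibly after passing to a common iterate. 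Alternatively, one observes that domination with index $1$ is a property of the \emph{projective} action and is invariant under replacing the cocycle by a cohomologous one via bounded conjugacies, which is precisely the relation between $\mathcal{A}$ and its holonomy-conjugated path cocycle along these loops.

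\textbf{Main obstacle.} I expect the main technical difficulty to be precisely this bookkeeping: matching the word combinatorics of $J_1(I) I J_2(I)$ with the ``path from $p$ to $p$'' structure in Theorem~\ref{dominated-th}, and making sure that the bounded holonomy factors do not spoil the \emph{uniformity} of the domination constants as $n \to \infty$. One has to check that the block of $\omega$ realizing $J_1 I J_2$ really is an admissible word of $\Sigma$ (immediate, since it is a subword of a point of $\Sigma$) and that the induced one-step products $\mathcal{A}_{J_1(I)IJ_2(I)} = A_{\cdots}\cdots A_{\cdots}$ coincide with $\mathcal{A}^{\ell(n)}(\omega)$ up to the holonomy correction; then the cone criterion applies verbatim. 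Everything else—Remark~\ref{dominated-wedge} for the wedge reformulation, the expansivity/one-step remarks about existence of holonomies—is already in place, so the proof is mainly an application of Theorem~\ref{dominated-th} plus the classical cone-contraction-implies-domination lemma.
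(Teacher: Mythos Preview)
Your overall plan---apply Theorem~\ref{dominated-th} to some $x\in[I]$, read off $J_1,J_2$ from the first $m_1$ and last $m_2$ symbols of the block of $\omega$, and then invoke the Bochi--Gourmelon multicone criterion \cite{BGO}---is exactly the paper's approach. The combinatorics you describe (admissibility of $J_1IJ_2$, the bound $\ell(n)\in[n,n+2K]$) are correct.

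The gap is in how you handle the holonomies. You treat the discrepancy between the path cocycle $B=H^s\mathcal{A}^{\ell(n)}H^u$ and the raw product $\mathcal{A}_{J_1IJ_2}$ as something to be controlled by boundedness of $H^{s/u}$, or by a cohomology argument. Neither of these works as stated: if each $B_k=H^s_k\mathcal{A}_kH^u_k$ maps the cone into itself, an arbitrary product $B_{k_N}\cdots B_{k_1}$ interleaves $2N$ holonomy factors among the $\mathcal{A}_{k_j}$, and bounded does not mean negligible after $N$ compositions. Nor is $(B_k)$ cohomologous to $(\mathcal{A}_k)$ in the usual sense, since the conjugating maps depend on $k$ rather than on the base point of a single transfer function.

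The point you are missing---and the paper uses explicitly---is that for a \emph{one-step} cocycle the local canonical holonomies are the identity: if $y\in W^{s}_{\loc}(x)$ then $\mathcal{A}^n(y)=\mathcal{A}^n(x)$ for all $n\ge 0$ (the product depends only on $x_0,\dots,x_{n-1}$), so $H^s_{y\leftarrow x}=\mathrm{Id}$, and likewise $H^u=\mathrm{Id}$ on local unstable sets. Hence every holonomy appearing in the decomposition of the path cocycle is the identity, and $B_{p,\omega,T^{m_1+n+m_2}(\omega),p}=\mathcal{A}^{m_1+n+m_2}(\omega)=\mathcal{A}_{J_1IJ_2}$ on the nose. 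With this observation the cone invariance from Theorem~\ref{dominated-th} applies directly to the tuple $(\mathcal{A}_k)_{k\in\mathcal{L}^D_{\ell(n)}}$, and \cite[Theorem~B]{BGO} gives domination with index $1$ immediately, with no error terms to absorb.
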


\begin{proof}
We consider the path \eqref{path-main} in Theorem \ref{dominated-th}. By using the properties of the canonical holonomies,$$\begin{aligned}
& H_{p \longleftarrow T^{m_1+n+m_2}(\omega)}^{s}\mathcal{A}^{m_1+n+m_2}(\omega)H_{\omega \longleftarrow p}^u=\\
& \underbrace{H_{p \longleftarrow T^{m_2}(T^{n+m_1}(\omega))}^{s}\mathcal{A}^{m_2}(T^{n+m_1}(\omega))  H_{T^{m_1+n}(\omega)\longleftarrow T^{n}(z_1)}^{u}}_\textrm{(2)}\\
&H_{T^{n}(z_1)\longleftarrow T^{m_1+n}(\omega)}^{u}  H_{T^{m_1+n}(\omega)\longleftarrow T^{n}(x)}^{s}\mathcal{A}^{n}(x)\\
&\underbrace{H_{x \longleftarrow T^{m_{1}}(\omega)}^s \mathcal{A}^{m_{1}}(\omega) H_{\omega \longleftarrow p}^u}_\textrm{(1)},
  \end{aligned}
  $$
  where $z_1:= [T^{m_{1}}(\omega), x].$

We consider $I$ as length $n$ orbit of $x$, $J_2:=J_2(I)$ as the path (2) of length $m_2$ from $T^{n}(z_1)$ to $p$ and $J_1:=J_1(I)$ as the path (1) of length $m_1$ from $p$ to $x$. By Theorem \ref{dominated-th}, there is $K \in \N$ such that $|J_1|+|J_2| \leq 2K.$  Taking into account that $H_{T^{n}(z_1)\longleftarrow T^{m_1+n}(\omega)}^{u},  H_{T^{m_1+n}(\omega)\longleftarrow T^{n}(x)}^{s}$ are identity, the path $\mathcal{A}_{J_2}\mathcal{A}_{I}\mathcal{A}_{J_1}$ \footnotemark\footnotetext{$\mathcal{A}_{J_2}\mathcal{A}_{I}\mathcal{A}_{J_1}=\mathcal{A}_{J_1IJ_2}$.}   maps $\mathcal{C}(v_{1}, \tau)$ into its interior by Theorem \ref{dominated-th}.  Bochi and Gourmelon \cite[Theorem B]{BGO} showed that a one-step cocycle is dominated with index 1 if and only if there exists a strongly invariant multicone  that can be checked for finite set or finite sequences. Therefore,  we obtain the assertion. 
\end{proof}

Let $\mathcal{A}$ be a typical cocycle. For $t\in \{1, \ldots, d-1\}$,  we denote by $H^{s / u, t}:=\left(H^{s / u}\right)^{\wedge t}$ the canonical holonomies of $\mathcal{A}_{t}$, where $H^{s / u}$ are the canonical holonomies of $\mathcal{A}$.  We also set $\mathbb{V}_{t}:=\mathbb{R}^{\left( \begin{array}{l}d \\ t\end{array}\right)}$ and $\mathcal{A}_t:=\mathcal{A}^{\wedge t}$  for $t\in \{1, \ldots, d-1 \}$.

 Arguing as in \cite[Theorem 4.6]{Par22}, we can prove the following result.
\begin{thm}\label{dominated_typical}
Let $\mathcal{A}_t:\Sigma \to GL(\mathbb{V}_t)$ be a 1-typical cocycle with distinguished fixed point $p$ 
and its homoclinic point $z$ for all $t \in \{1, \ldots, d-1\}$ (see Remark \ref{fixed point}). Then there exist $\varepsilon>0$ and $K_{0} \in \N$  such that for any $x\in \Sigma$ and $n\in \N$, there exists $\omega:=\omega_{x}\in \Sigma$ such that
 \begin{itemize}
 \item[1)]there is $m_{1}:=m_1(\omega) \in \N$ such that $T^{m_1}(\omega)\in [x]_n,$ and
 \item[2)]there is $m_2:=m_2(\omega)\in \N$ such that a path cocycle \begin{equation}\label{path-main1}
  B_{p, \omega, T^{m_1+n+m_2}(\omega), p}^{t}:= H_{p \longleftarrow T^{m_1+n+m_2}(\omega)}^{s, t}\mathcal{A}_t^{m_1+n+m_2}(\omega)H_{\omega \longleftarrow p}^{u, t}
 \end{equation}

 from $p$ to $p$ of length $m_1+m_2+n$ satisfying \[
  B_{p, \omega, T^{m_1+n+m_2}(\omega), p}^t\mathcal{C}(v_{1}^{t}, \varepsilon)\subset \mathring{\mathcal{C}}(v_{1}^{t}, \varepsilon),
\]
 \end{itemize}
for all $t \in\{1, \ldots, d-1\}$, where $v_1^{t}$ is the eigendirection $\mathcal{A}_t (p)$ corresponding to the eigenvalue of the largest modulus and $m_i \leq K_{0}$ for $i=1,2.$
 
 \end{thm}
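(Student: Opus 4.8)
The plan is to reduce Theorem \ref{dominated_typical} to the 1-typical case handled in Theorem \ref{dominated-th} by applying the latter simultaneously to each exterior-power cocycle $\mathcal{A}_t = \mathcal{A}^{\wedge t}$, using the fact that all the $\mathcal{A}_t$ are 1-typical with respect to \emph{the same} typical pair $(p,z)$ (this is exactly the definition of $\mathcal{A}$ being typical, together with Remark \ref{fixed point} which lets us take $p$ to be a fixed point). The crucial point that makes the result more than a trivial union of $d-1$ separate statements is that we need a \emph{single} choice of connecting point $\omega = \omega_x$ and a single pair of gluing lengths $m_1, m_2$ that works for all $t$ at once; producing independent data for each $t$ would be useless because the resulting words would not be compatible.

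Concretely, here is how I would carry it out. First, recall from the construction behind Theorem \ref{dominated-th} (i.e. the argument of \cite[Theorem 3.1]{Par22} and \cite[Lemma 3.10]{Par22}) that the connecting point $\omega_x$ and the lengths $m_1, m_2$ are built purely from the combinatorics of the shift: $\omega_x$ is obtained by concatenating a bounded-length prefix taking $p$ to (a point whose orbit shadows) $x$, then following $[x]_n$, then a bounded-length suffix returning to $p$ via the homoclinic point $z$. This combinatorial data depends only on $(p,z)$ and on the word $I$ of length $n$, \emph{not} on $t$. Since $(p,z)$ is a common typical pair for every $\mathcal{A}_t$, the \emph{same} $\omega_x$, $m_1$, $m_2$ are legitimate choices for each $t$ simultaneously. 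So I would:
\begin{itemize}
\item[(a)] Fix the typical pair $(p,z)$ with $p$ a fixed point (Remark \ref{fixed point}), and apply the construction of Theorem \ref{dominated-th} to each $\mathcal{A}_t$, $1 \le t \le d-1$, noting that the combinatorial output $(\omega_x, m_1, m_2)$ is the same for all $t$; let $\varepsilon_t > 0$ and $K_t \in \N$ be the (cone size, length bound) produced for index $t$.
\item[(b)] Set $\varepsilon := \min_{1 \le t \le d-1} \varepsilon_t$ and $K_0 := \max_{1 \le t \le d-1} K_t$. Then $m_i \le K_0$ for $i = 1,2$, and for each $t$ the path cocycle $B^t_{p,\omega,T^{m_1+n+m_2}(\omega),p}$ defined in \eqref{path-main1} maps $\mathcal{C}(v_1^t, \varepsilon_t)$ into its interior; a fortiori, shrinking the cone to size $\varepsilon \le \varepsilon_t$ still gives $B^t_{p,\omega,\dots,p}\,\mathcal{C}(v_1^t,\varepsilon) \subset \mathring{\mathcal{C}}(v_1^t,\varepsilon)$, since a strictly invariant cone remains strictly invariant after shrinking (this is an elementary property of cone contraction under a linear map with a dominating eigendirection, and it is exactly how $\varepsilon$ enters Theorem \ref{dominated-th}).
\item[(c)] Observe that $v_1^t$, the eigendirection of $\mathcal{A}_t(p) = \mathcal{A}(p)^{\wedge t}$ of largest modulus, is $v_1 \wedge \dots \wedge v_t$ where $v_1, \dots, v_d$ are the eigenvectors of $\mathcal{A}(p)$ ordered by decreasing modulus; this is well-defined and nonzero precisely because of condition (i) in Definition \ref{typical1} (simple eigenvalues of distinct absolute value), so the statement is meaningful for every $t$.
\end{itemize}

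The main obstacle — or rather, the one point that requires genuine care rather than bookkeeping — is verifying that the choice of $\omega_x$ and of $m_1, m_2$ in the proof of Theorem \ref{dominated-th} really is $t$-independent, i.e. that nothing in \cite{Par22}'s construction secretly uses the dimension of the ambient space or the particular value of $t$. This is where one must look inside the cited argument: the bounded-length connecting words come from the homoclinic structure of $(p,z)$ (how long it takes the orbit of $z$ to enter and leave $W^s_{\mathrm{loc}}(p)$, $W^u_{\mathrm{loc}}(p)$) and from the mixing rate of $\Sigma$, both of which are features of the base dynamics and the fixed pair, independent of the fiber. Once this is granted, the rest is the routine passage to $\varepsilon = \min \varepsilon_t$ and $K_0 = \max K_t$ described above, and the theorem follows. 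I would therefore phrase the proof as: ``This follows by applying Theorem \ref{dominated-th} to each $\mathcal{A}_t$ with the common typical pair $(p,z)$, observing that the connecting point $\omega_x$ and the lengths $m_1, m_2$ furnished by the construction of \cite[Lemma 3.10]{Par22} depend only on $(p,z)$, the mixing rate of $\Sigma$, and the word $I$, hence not on $t$; then take $\varepsilon := \min_t \varepsilon_t$ and $K_0 := \max_t K_t$.''
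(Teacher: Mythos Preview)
The paper does not give a proof of this theorem; it simply prefaces the statement with ``Arguing as in \cite[Theorem 4.6]{Par22}, we can prove the following result.'' So there is no detailed argument in the paper to compare against line by line. That said, your proposed reduction to Theorem \ref{dominated-th} has a genuine gap at exactly the point you yourself flag as the main obstacle.

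You correctly identify that the whole content of the theorem is that a \emph{single} choice of $(\omega_x, m_1, m_2)$ must work for all $t$ simultaneously. But your justification --- that these data ``depend only on $(p,z)$, the mixing rate of $\Sigma$, and the word $I$, hence not on $t$'' --- is not correct. In Park's construction the lengths $m_1, m_2$ (equivalently, the specific connecting words $J_1, J_2$) are chosen so that the resulting path cocycle sends the cone $\mathcal{C}(v_1^t,\varepsilon_t)$ into its interior; which connecting word achieves this depends on where $\mathcal{A}_t^n(\cdot)$ sends directions in $\mathbb{P}(\mathbb{V}_t)$, and that is fiber-dependent, hence $t$-dependent. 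Applying Theorem \ref{dominated-th} separately to each $t$ yields, for a fixed $(x,n)$, possibly different $\omega_x^{(t)}$ and $m_i^{(t)}$. Setting $K_0 = \max_t K_t$ bounds all of them uniformly but does not select a common one. This is precisely why \cite{Par22} proves a separate Theorem 4.6 for the simultaneous statement rather than deducing it from Theorem 3.1: an additional argument (using that all $\mathcal{A}_t$ share the \emph{same} typical pair $(p,z)$, together with a finiteness/pigeonhole step over the possible connecting words) is needed to exhibit one word that works for every $t$ at once.

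A secondary issue: in step (b), from ``$B$ maps $\mathcal{C}(v_1^t,\varepsilon_t)$ into its interior'' it does not follow formally that $B$ maps the smaller cone $\mathcal{C}(v_1^t,\varepsilon)$ into \emph{its own} interior; $B$ could a priori rotate the small cone off $v_1^t$ while keeping it inside the large cone. This is harmless in the end because the path cocycles here genuinely contract toward $v_1^t$, but as written the inference is not justified.
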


   Note that by multilinear algebra properties,
  \begin{equation}\label{wedge-path}
  B_{p, \omega, T^{m_1+n+m_2}(\omega), p}^{t}=B_{p, \omega, T^{m_1+n+m_2}(\omega), p}^{\wedge t}.
  \end{equation}

 \begin{cor}\label{dominated-one-step-cocycle}
 Assume that $\left(A_1, \ldots, A_k\right) \in GL(d, \R)^k$ generates a one-step cocycle $\mathcal{A}:\Sigma \to GL(d, \R).$ Suppose that $\mathcal{A}:\Sigma \to GL(d,\R)$ is a typical cocycle. Then, there exists $K_0 \in \N$ such that for every $n\in \N$ and $I \in \mathcal{L}_{n}$ there exist $J_2=J_2(I)$ and $J_1=J_1(I)$ with $|J_{i}|\leq K_0$ for $i=1,2$ such that the tuple \[\left(\mathcal{A}_{\mathrm{k}}\right)_{\mathrm{k} \in \mathcal{L}_{\ell(n)}^{\mathcal{D}}}, \quad \text{where } \mathcal{L}_{\ell(n)}^{\mathcal{D}}:=\left\{J_1(I) IJ_2(I): I \in \mathcal{L}_{n}\right\},\]
is dominated.
\end{cor}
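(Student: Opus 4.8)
The plan is to combine Theorem \ref{dominated_typical} with the multilinear-algebra translation \eqref{wedge-path} and Bochi--Gourmelon's multicone criterion, exactly mirroring the one-index argument carried out in the proof of Corollary \ref{dominated_one_step}. First I would unwind the path cocycle in \eqref{path-main1} in the same way as in the proof of Corollary \ref{dominated_one_step}: writing $z_1 := [T^{m_1}(\omega), x]$, the $t$-th exterior power holonomies split the product $H^{s,t}_{p \leftarrow T^{m_1+n+m_2}(\omega)} \mathcal{A}_t^{m_1+n+m_2}(\omega) H^{u,t}_{\omega \leftarrow p}$ into three blocks, a prefix block $(1)_t$ of length $m_1$ running from $p$ to $x$, the middle cocycle $\mathcal{A}_t^n(x)$ along the length-$n$ orbit of $x$, and a suffix block $(2)_t$ of length $m_2$ running from $T^n(z_1)$ to $p$, with the two intermediate holonomies $H^{u,t}_{T^n(z_1) \leftarrow T^{m_1+n}(\omega)}$ and $H^{s,t}_{T^{m_1+n}(\omega) \leftarrow T^n(x)}$ being the identity. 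By \eqref{wedge-path}, each of these $t$-th power blocks is the $\wedge t$ of the corresponding block for $\mathcal{A}$ itself, so if I set $J_1 = J_1(I)$ to be the length-$m_1$ word realizing block $(1)$ and $J_2 = J_2(I)$ the length-$m_2$ word realizing block $(2)$ (with $I$ the length-$n$ orbit word of $x$), then $\mathcal{A}_{J_1 I J_2} = \mathcal{A}_{J_2}\mathcal{A}_I\mathcal{A}_{J_1}$ and its $t$-th exterior power is precisely the path cocycle $B^t_{p,\omega,T^{m_1+n+m_2}(\omega),p}$.

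Next I would invoke the cone-invariance conclusion of Theorem \ref{dominated_typical}: for every $t \in \{1,\dots,d-1\}$ the matrix $\mathcal{A}_{J_1 I J_2}^{\wedge t}$ maps the cone $\mathcal{C}(v_1^t,\varepsilon)$ around the top eigendirection of $\mathcal{A}_t(p)$ into its interior, with the same uniform $\varepsilon$ and the same uniform bound $|J_i| \le K_0$ over all $I$ and all $t$. By Remark \ref{dominated-wedge}, domination of $\mathcal{A}$ with index $t$ is equivalent to domination of $\mathcal{A}^{\wedge t}$ with index $1$, so it suffices to verify, for each fixed $t$, that the tuple $(\mathcal{A}_{\mathrm{k}}^{\wedge t})_{\mathrm{k} \in \mathcal{L}^{\mathcal{D}}_{\ell(n)}}$ is dominated with index $1$. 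Here I apply Bochi--Gourmelon \cite[Theorem B]{BGO}, exactly as in the proof of Corollary \ref{dominated_one_step}: a finite (or finitely generated) one-step cocycle is dominated with index $1$ if and only if it admits a strongly invariant multicone, and the existence of such a multicone can be checked on finite sequences; the invariant cone $\mathcal{C}(v_1^t,\varepsilon)$ (suitably saturated into a multicone around the images of $v_1^t$ under the finitely many relevant words) provides exactly this. Running this for every $t \in \{1,\dots,d-1\}$ and taking $K_0$ to be the maximum of the finitely many constants gives that $(\mathcal{A}_{\mathrm{k}})_{\mathrm{k} \in \mathcal{L}^{\mathcal{D}}_{\ell(n)}}$ is dominated with index $t$ for every $t$, i.e. dominated.

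The main subtlety — and the one point that needs care beyond a verbatim repetition of Corollary \ref{dominated_one_step} — is uniformity \emph{across the indices $t$}: one needs the \emph{same} connecting words $J_1(I), J_2(I)$ to simultaneously produce cone invariance for all $t \in \{1,\dots,d-1\}$, not a different connecting word for each $t$. This is precisely why Definition \ref{typical1} requires $\mathcal{A}^{\wedge t}$ to be $1$-typical \emph{with respect to the same typical pair $(p,z)$} for every $t$, and why Theorem \ref{dominated_typical} is stated with a single $\omega = \omega_x$ and single pair $m_1(\omega), m_2(\omega)$ working for all $t$. Granting Theorem \ref{dominated_typical} — which is the quoted input, proved by adapting \cite[Theorem 4.6]{Par22} — the passage to the one-step setting is then the routine holonomy bookkeeping above, and there is no further obstacle.
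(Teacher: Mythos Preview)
Your proposal is correct and follows essentially the same argument as the paper: decompose the path cocycle from Theorem~\ref{dominated_typical} via the canonical holonomies into a prefix word $J_1$, the orbit word $I$, and a suffix word $J_2$ (with the intermediate holonomies trivial), then use \eqref{wedge-path} and the simultaneous cone contraction for all $t$ to feed into Bochi--Gourmelon's multicone criterion combined with Remark~\ref{dominated-wedge}. The only cosmetic difference is that the paper writes $z_2$ where you write $z_1$, and your closing paragraph on uniformity across $t$ makes explicit what the paper leaves implicit in its appeal to Theorem~\ref{dominated_typical}.
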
 
 \begin{proof}
We consider the path \eqref{path-main1} in Theorem \ref{dominated_typical}. By using the properties of the canonical holonomies, $$\begin{aligned}
& B_{p, \omega, T^{m_1+n+m_2}(\omega), p}^{t}=H_{p \longleftarrow T^{m_1+n+m_2}(\omega)}^{s, t}\mathcal{A}_{t}^{m_1+n+m_2}(\omega)H_{\omega \longleftarrow p}^{u, t}= \\
& \underbrace{H_{p \longleftarrow T^{m_2}(T^{n+m_1}(\omega))}^{s, t}\mathcal{A}_{t}^{m_2}(T^{n+m_1}(\omega))  H_{T^{m_1+n}(\omega)\longleftarrow T^{n}(z_2)}^{u, t}}_\textrm{(2)}\\
&H_{T^{n}(z_2)\longleftarrow T^{m_1+n}(\omega)}^{u, t}  H_{T^{m_1+n}(\omega)\longleftarrow T^{n}(x)}^{s, t}\mathcal{A}_{t}^{n}(x)\\
&\underbrace{H_{x \longleftarrow T^{m_{1}}(\omega)}^{s, t} \mathcal{A}_{t}^{m_{1}}(\omega) H_{\omega \longleftarrow p}^{u, t}}_\textrm{(1)},
  \end{aligned}
  $$
  where $z_2:= [T^{m_{1}}(\omega), x].$

  We consider $I$ as length $n$ orbit of $x$, $J_2:=J_2(I)$ as the path (2) of length $m_2$ from $T^{n}(z_2)$ to $p$ and $J_1:=J_1(I)$ as the path (1) of length $m_1$ from $p$ to $x$. By Theorem \ref{dominated_typical}, there is $K_0 \in \N$ such that $|J_1|+|J_2| \leq 2K_0.$  Taking into account that \(H_{T^{n}(z_2)\longleftarrow T^{m_1+n}(\omega)}^{u, t}\) and \(H_{T^{m_1+n}(\omega)\longleftarrow T^{n}(x)}^{s, t}\) are identity, the path $\mathcal{A}^{\wedge t}_{J_2}\mathcal{A}^{\wedge t}_{I}\mathcal{A}^{\wedge t}_{J_1}$ \footnotemark\footnotetext{$\mathcal{A}^{\wedge t}_{J_2}\mathcal{A}^{\wedge t}_{I}\mathcal{A}^{\wedge t}_{J_1}=\mathcal{A}^{\wedge t}_{J_1IJ_2}$.} maps \(\mathcal{C}(v_{1}^{t}, \tau)\) into its interior by Theorem \ref{dominated_typical} for any \(t \in \{1, \ldots, d-1\}\).  Bochi and Gourmelon \cite[Theorem B]{BGO} showed that a one-step cocycle $\mathcal{A}^{\wedge t}$ is dominated with index 1 for each $t\in \{1, \ldots, d-1 \}$ if and only if there exists a strongly invariant multicone  for $\mathcal{A}^{\wedge t}$ for each $t\in \{1, \ldots, d-1 \}$ that can be checked for finite set or finite sequences. Therefore,  we obtain the assertion (see Remark \ref{dominated-wedge}). 
 \end{proof}
 The corollary \ref{dominated-one-step-cocycle} extends a similar result in \cite{BJKR} that was proved in the two-dimensional setting.

For simplicity, we denote by $\ell:=\ell(n)$ the length of each $I \in \mathcal{L}_{\ell(n)}^{\mathcal{D}}$, where $\ell \in [n, n+2K_0].$ We also denote $\mathcal{L}_{\ell}^{\mathcal{D}}:=\mathcal{L}_{\ell(n)}^{\mathcal{D}}.$ Let $\A: \Sigma \to GL(d, \R)$ be a one-step cocycle.   Assume that $\A:\Sigma \to GL(d, \R)$ is a typical cocycle. By Corollary \ref{dominated-one-step-cocycle}, the one-step cocycle $\mathcal{B}: (\mathcal{L}_{\ell}^{\mathcal{D}})^{\Z} \to GL(d, \R)$ over a full shift $( (\mathcal{L}_{\ell}^{\mathcal{D}})^{\Z}, f )$ defined by $\mathcal{B}(\omega):=\A_{J_1(I)IJ_2(I)}$, where $\mathcal{B}$ depends only on the zero-th symbol $J_1(I)IJ_2(I)$ of $\omega \in (\mathcal{L}_{\ell}^{\mathcal{D}})^{\Z}$, is dominated (see Remark \ref{dominated-wedge}).  It is easy to see that $(\mathcal{L}_{\ell}^{\mathcal{D}})^{\Z} \subset \Sigma.$

We define a pressure on the dominated subsystem $\mathcal{L}_{\ell}^{\mathcal{D}}$ by setting
\[P_{\ell, \mathcal{D}}(\log\varphi):=\lim _{k \rightarrow \infty} \frac{1}{k} \log \sum_{I_{1}, \ldots, I_{k} \in \mathcal{L}_{\ell}^{\mathcal{D}}}\varphi(I_1 \ldots I_k),\]
where $\varphi: \mathcal{L} \rightarrow \mathbb{R}_{\geq 0}$ is submultiplicative, i.e.,
$$
\varphi(\mathrm{I}) \varphi(\mathrm{J}) \geq \varphi(\mathrm{IJ}).
$$
for all $I, J \in \mathcal{L}$ with $IJ \in \mathcal{L}.$

In the following proposition, the author \cite[Proposition 5.8]{Moh22} proved that if a cocycle $\mathfrak{A}$ is dominated with index $i$, which can be characterized in terms of the existence of invariant cone fields or multicones (see \cite{BGO, CP}), then $\{\log \|\mathfrak{A}^n\|\}_{n \in \mathbb{N}}$ is almost additive.
\begin{prop} \label{prop:add}
Let $X$ be a compact metric space, and let $\mathcal{A}: X\rightarrow GL(d, \R)$ be a matrix cocycle over a homeomorphism $(X,T)$. Assume that the cocycle $\mathcal{A}$ is dominated with index 1. Then, there exists $\kappa>0$ such that for every $m,n>0$ and for every $x\in X$ we have

\[
||\mathcal{A}^{m+n}(x)|| \geq \kappa ||\mathcal{A}^m(x)|| \cdot ||\mathcal{A}^n(T^m(x))||.
\]

\end{prop}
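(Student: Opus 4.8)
The plan is to reduce the claimed (sub-)superadditivity to one uniform comparison: that $\|\mathcal{A}^n(x)\|$ is, uniformly in $n$ and $x$, within a fixed constant factor of the expansion of $\mathcal{A}^n(x)$ along the one-dimensional dominated subbundle. Since $\mathcal{A}$ is dominated with index $1$, by \cite{BGO} there is a continuous $\mathcal{A}$-invariant splitting $\R^d = E(x)\oplus F(x)$ over $X$ with $\dim E = 1$, $\dim F = d-1$, and constants $C'>0$, $\tau'\in(0,1)$ such that, writing $v(x)$ for a unit vector spanning $E(x)$ and using that the minimal expansion of $\mathcal{A}^n(x)|_{E(x)}$ equals $\|\mathcal{A}^n(x)\,v(x)\|$,
\[
\|\mathcal{A}^n(x)\,w\|\le C'(\tau')^n\,\|\mathcal{A}^n(x)\,v(x)\|\qquad\text{for all }n\ge 1,\ x\in X,\ w\in F(x),\ \|w\|=1 .
\]
Invariance gives $\mathcal{A}^n(x)E(x)=E(T^n x)$ and $\mathcal{A}^n(x)F(x)=F(T^n x)$. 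Because the splitting is continuous and $X$ is compact, the angle $\angle(E(x),F(x))$ is bounded below by some $\theta_0>0$, so the oblique projections onto $\R v(x)$ along $F(x)$ and onto $F(x)$ along $\R v(x)$ have norms bounded by a constant $M=M(\theta_0)$; hence every unit $u\in\R^d$ decomposes as $u=a\,v(x)+w$ with $w\in F(x)$ and $|a|,\|w\|\le M$.

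First I would prove the comparison estimate. Fix $x$ and $n\ge 1$, let $u$ be an arbitrary unit vector, and use the decomposition together with the domination inequality:
\[
\|\mathcal{A}^n(x)\,u\|\le |a|\,\|\mathcal{A}^n(x)\,v(x)\|+\|\mathcal{A}^n(x)\,w\|\le M\bigl(1+C'(\tau')^n\bigr)\,\|\mathcal{A}^n(x)\,v(x)\|\le M(1+C')\,\|\mathcal{A}^n(x)\,v(x)\| .
\]
Taking the supremum over unit $u$ gives $\|\mathcal{A}^n(x)\|\le c_1^{-1}\,\|\mathcal{A}^n(x)\,v(x)\|$ with $c_1:=\bigl(M(1+C')\bigr)^{-1}>0$, while trivially $\|\mathcal{A}^n(x)\,v(x)\|\le\|\mathcal{A}^n(x)\|$. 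Thus $\|\mathcal{A}^n(x)|_{E(x)}\|=\|\mathcal{A}^n(x)\,v(x)\|$ lies within the factor $c_1$ of $\|\mathcal{A}^n(x)\|$, uniformly in $n$ and $x$.

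The proposition then follows from one-dimensionality and invariance of $E$. Since $E(T^m x)$ is spanned by $v(T^m x)$, invariance forces $\mathcal{A}^m(x)\,v(x)=\pm\|\mathcal{A}^m(x)\,v(x)\|\,v(T^m x)$, so
\[
\|\mathcal{A}^{m+n}(x)\|\ \ge\ \|\mathcal{A}^{m+n}(x)\,v(x)\|\ =\ \|\mathcal{A}^m(x)\,v(x)\|\cdot\|\mathcal{A}^n(T^m x)\,v(T^m x)\| .
\]
Applying the comparison estimate to each factor on the right (at $x$ with exponent $m$, and at $T^m x$ with exponent $n$) yields $\|\mathcal{A}^{m+n}(x)\|\ge c_1^2\,\|\mathcal{A}^m(x)\|\cdot\|\mathcal{A}^n(T^m x)\|$, so one may take $\kappa:=c_1^2$.

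The only substantive step is the comparison estimate, and its crux is importing the continuous dominated splitting together with the norm/minimal-expansion gap from \cite{BGO}; the uniform positivity of $\angle(E,F)$ (hence the uniform bound on the coefficients in $u=a\,v(x)+w$) is where compactness of $X$ is used. Everything afterwards is bookkeeping, exploiting crucially that $\dim E=1$, so the cocycle acts on $E$ by scalars and "norm along $E$" becomes a genuinely multiplicative quantity along orbits.
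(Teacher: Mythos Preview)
Your proof is correct. The paper does not supply its own proof of this proposition but simply cites \cite[Proposition~5.8]{Moh22}, remarking that domination with index $1$ can be characterized in terms of invariant cone fields or multicones (see \cite{BGO, CP}). Your argument instead imports the continuous dominated splitting $E\oplus F$ with $\dim E=1$ (also provided by \cite{BGO}) and exploits that one-dimensionality of $E$ makes the restriction $\mathcal{A}^n|_E$ genuinely multiplicative along orbits; the whole proposition then reduces to the single uniform comparison $c_1\|\mathcal{A}^n(x)\|\le\|\mathcal{A}^n(x)v(x)\|\le\|\mathcal{A}^n(x)\|$. The two routes are essentially equivalent, since invariant cone fields and continuous dominated splittings are interchangeable characterizations, but your splitting argument is arguably the more transparent of the two: it avoids any explicit cone geometry and isolates precisely where compactness of $X$ enters (the uniform angle bound, hence the uniform projection bound $M$).
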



Let $\A:\Sigma \to GL(d, \R)$ be a one-step cocycle. Assume that $\mathcal{A}:\Sigma \to GL(d, \R)$ is a typical cocycle. Then, we can construct a dominated cocycle $\mathcal{B}$ as we explained above. We denote 
\[
\Psi(\mathcal{B}):=\left(\log \sigma_{1}(\mathcal{B}), \ldots, \log \sigma_{d}(\mathcal{B})\right).
\]

\begin{thm}\label{continuity_potential} Assume that $\left(A_1, \ldots, A_k\right) \in GL(d, \R)^k$ generates a one-step cocycle $\mathcal{A}:\Sigma \to GL(d, \R).$ Suppose that $\mathcal{A}:\Sigma \to GL(d, \R)$ is a typical cocycle. Then, 
\[\lim _{\ell \rightarrow \infty} \frac{1}{\ell} P_{\ell, \mathcal{D}}(\langle q, \Psi(\mathcal{B}) \rangle )=P(\log \psi^{q}(\mathcal{A})),\]
uniformly for all $q$ on any compact subsets of $\R^d$.
\end{thm}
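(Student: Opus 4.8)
The plan is to show the two-sided inequality $\limsup_{\ell\to\infty}\frac1\ell P_{\ell,\mathcal{D}}(\langle q,\Psi(\mathcal{B})\rangle)\le P(\log\psi^q(\mathcal{A}))$ and $\liminf_{\ell\to\infty}\frac1\ell P_{\ell,\mathcal{D}}(\langle q,\Psi(\mathcal{B})\rangle)\ge P(\log\psi^q(\mathcal{A}))$, with uniformity in $q$ on compacta obtained by tracking the constants. Since $\mathcal{B}$ is dominated (Corollary \ref{dominated-one-step-cocycle}), Remark \ref{dominated-wedge} and Proposition \ref{prop:add} give that each $\{\log\|\mathcal{B}^{\wedge t}\|\}$ is almost additive, hence $\langle q,\Psi(\mathcal{B})\rangle=\log\psi^q(\mathcal{B})$ defines an almost additive potential on $(\mathcal{L}_\ell^{\mathcal{D}})^{\Z}$ with distortion constants independent of which words $I_1,\dots,I_k$ are concatenated; this is what makes $P_{\ell,\mathcal{D}}$ well defined by a genuine limit and lets me compare $\psi^q$ of a concatenation with the product of the factors up to a multiplicative constant raised only to the power $\sum_i|q_i-q_{i+1}|$, which is bounded on compacta.

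For the \textbf{upper bound}: every word $J_1(I)IJ_2(I)\in\mathcal{L}_\ell^{\mathcal{D}}$ is an admissible word in $\Sigma$ of length $\ell\in[n,n+2K_0]$, and distinct $I\in\mathcal{L}_n$ give distinct such words, so a $k$-fold concatenation $I_1\cdots I_k$ lands in $\mathcal{L}_{k\ell}$ (or close to it). Using submultiplicativity of $\psi^q$ up to the constant coming from the $K_0$ padding blocks $J_1,J_2$ (which contribute a bounded factor per concatenation point, i.e. $e^{O(k)}$ with a constant linear in $\|q\|$ and in $\max_A\log\|A^{\pm1}\|$ and in $K_0$), I bound $\sum_{I_1,\dots,I_k}\psi^q(\mathcal{A}_{I_1\cdots I_k})$ by $C^k\,s_{k\ell}(q)$, hence $\frac1\ell P_{\ell,\mathcal{D}}\le\frac1\ell\log C + \frac{1}{k\ell}\log s_{k\ell}(q)\to\frac1\ell\log C + P(\log\psi^q(\mathcal{A}))$ as $k\to\infty$; letting $\ell\to\infty$ kills the $\frac1\ell\log C$ term.

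For the \textbf{lower bound}, which I expect to be the main obstacle: I want to recover most of the mass of $s_N(q)=\sum_{I\in\mathcal{L}_N}\psi^q(\mathcal{A}_I)$ from words that are already concatenations of $\mathcal{D}$-blocks. Given $I\in\mathcal{L}_N$ with $N\approx k\ell$, Corollary \ref{dominated-one-step-cocycle} lets me insert bounded-length connecting words inside $I$ so as to cut it into $k$ pieces each lying in some $\mathcal{L}_\ell^{\mathcal{D}}$; the point is that for a \emph{dominated} tuple $\psi^q$ is now almost \emph{super}multiplicative as well (again Proposition \ref{prop:add} applied to all exterior powers, combined with the standard submultiplicativity), so $\psi^q$ of the cut-and-reconnected word is comparable to the product of the $\psi^q$ of the pieces, up to $C^k$ with $C$ uniform on compacta. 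The subtlety is that the surjection $\{(I_1,\dots,I_k)\in(\mathcal{L}_\ell^{\mathcal{D}})^k\}\to\{\text{long admissible words}\}$ is not a bijection and the connecting words are chosen depending on the neighboring pieces; I handle this exactly as in the proof of quasi-multiplicativity (bounded-to-one multiplicity, absorbed into $C$), and I compare $\psi^q$ along the original word $I$ with $\psi^q$ along the modified word by bounded distortion of the singular value potentials on cylinders plus the almost-additivity estimate. This yields $s_N(q)\le C^k\sum_{I_1,\dots,I_k\in\mathcal{L}_\ell^{\mathcal{D}}}\psi^q(\mathcal{B}(I_1)\cdots\mathcal{B}(I_k))\cdot e^{o(N)}$, so dividing by $N$ and sending $k\to\infty$ then $\ell\to\infty$ gives $P(\log\psi^q(\mathcal{A}))\le\liminf_\ell\frac1\ell P_{\ell,\mathcal{D}}(\langle q,\Psi(\mathcal{B})\rangle)$. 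Throughout, all comparison constants are of the form $C_0^{\,|q|_1+1}$ with $C_0$ depending only on $d$, $K_0$, the matrices, and the holonomy H\"older constant, so the two bounds match and converge uniformly for $q$ in any compact subset of $\R^d$.
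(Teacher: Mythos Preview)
Your upper bound is fine and is essentially the paper's argument: concatenations of $\mathcal D$-blocks are admissible words of length $k\ell$, so the partial sum is dominated by $s_{k\ell}(q)$ and one divides and lets $k\to\infty$, then $\ell\to\infty$. (You do not even need the extra $C^k$: the map $(I_1,\dots,I_k)\mapsto I_1\cdots I_k$ is injective into $\mathcal L_{k\ell}$.)

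The lower bound, however, has a genuine gap. You want to take an arbitrary $I=I^{(1)}\cdots I^{(k)}\in\mathcal L_N$, wrap each piece to get $\tilde I_j=J_1^{(j)}I^{(j)}J_2^{(j)}\in\mathcal L_\ell^{\mathcal D}$, and compare $\psi^q(\mathcal A_I)$ with $\psi^q(\mathcal A_{\tilde I_1\cdots\tilde I_k})$. On the $\mathcal B$-side you correctly get two-sided almost additivity, so $\psi^q(\mathcal A_{\tilde I_1\cdots\tilde I_k})\asymp\prod_j\psi^q(\mathcal A_{\tilde I_j})$, and a single bounded padding gives $\psi^q(\mathcal A_{\tilde I_j})\asymp\psi^q(\mathcal A_{I^{(j)}})$. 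But to close the loop you need
\[
\psi^q(\mathcal A_{I^{(1)}\cdots I^{(k)}})\ \le\ C^{\,k}\prod_{j=1}^k\psi^q(\mathcal A_{I^{(j)}}),
\]
and for each index $i$ with $t_i:=q_i-q_{i+1}<0$ this amounts to a \emph{lower} bound $\|\mathcal A_I^{\wedge i}\|\ge c^{\,k}\prod_j\|\mathcal A_{I^{(j)}}^{\wedge i}\|$, i.e.\ supermultiplicativity of the norm along the \emph{original, non-dominated} cocycle $\mathcal A$. Typicality gives quasi-multiplicativity only after inserting a specific connecting word, not for the given decomposition of $I$, and ``bounded distortion on cylinders'' is vacuous here (for a one-step cocycle the potential depends only on the word, and $I$ and $\tilde I_1\cdots\tilde I_k$ are different words). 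So the step ``compare $\psi^q$ along the original word $I$ with $\psi^q$ along the modified word'' does not go through when $q$ is not monotone.

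The paper bypasses this by running the comparison in the opposite direction and never decomposing on the $\mathcal A$-side. Using two-sided almost additivity of $\mathcal B$ (Proposition~\ref{prop:add} for every exterior power) one gets directly
\[
\frac{1}{\ell}\,P_{\ell,\mathcal D}(\langle q,\Psi(\mathcal B)\rangle)\ \ge\ \frac{1}{\ell}\log\sum_{I'\in\mathcal L_\ell^{\mathcal D}}\psi^q(\mathcal B_{I'})\ +\ \frac{C(\kappa,q)}{\ell},
\]
and then one compares a \emph{single} block $\psi^q(\mathcal B_{I'})=\psi^q(\mathcal A_{J_1IJ_2})$ with $\psi^q(\mathcal A_I)$ via $\|\mathcal A_{J_1IJ_2}^{\wedge i}\|\asymp\|\mathcal A_I^{\wedge i}\|$ (both directions, since $|J_1|,|J_2|\le K_0$). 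This yields $\tfrac{1}{\ell}P_{\ell,\mathcal D}\ge\tfrac{n}{\ell}\cdot\tfrac{1}{n}\log s_n(q)+O(1/\ell)$, and letting $\ell\to\infty$ (hence $n\to\infty$, with $n/\ell\to1$) finishes the lower bound. All constants are of the form $c^{\,\sum_i|t_i|}$, giving the claimed uniformity on compacta. Reworking your lower bound along these lines avoids the missing supermultiplicativity entirely.
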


\begin{proof}

First, we prove the lower bound. Since the matrix cocycle $\mathcal{B}$ is dominated, there is $\kappa>0$ such that for any $n \in \mathbb{N}$, $I, J \in \bigcup_{k=1}^{\infty}\left(\mathcal{L}_{\ell(n)}^{\mathcal{D}}\right)^{k}$, and  $t \in \{1, \ldots, d-1\}$,
 we have
\[\|\mathcal{B}^{\wedge t}_{IJ}\| \geq \kappa \|\mathcal{B}^{\wedge t}_{I}\| \|\mathcal{B}^{\wedge t}_{J}\| \]
 by proposition \ref{prop:add}. 
 Therefore, 
 $$
\begin{aligned}
 \frac{1}{\ell} P_{\ell, \mathcal{D}}(\langle q,\Psi(\mathcal{B}) \rangle)= & \lim _{k \rightarrow \infty} \frac{1}{\ell k} \log \sum_{I_1, \ldots, I_k \in \mathcal{L}_{\ell}^{\mathcal{D}}} e^{\langle q, \Psi(\mathcal{B}_{I_1 \ldots I_k}) \rangle} \\
& \geq \lim_{k \rightarrow \infty} \frac{1}{\ell k} \log \sum_{I_{1}, \ldots, I_{k} \in \mathcal{L}_{\ell}^{\mathcal{D}}}e^{\sum_{i=1}^{k}\langle q, \Psi(\mathcal{B}_{I_{i}}) \rangle +C(\kappa) k}  \\\\
&=\underbrace{\frac{1}{\ell}\log \sum_{I' \in \mathcal{L}_{\ell}^{\mathcal{D}}} e^{\langle q, \Psi(\mathcal{B}_{I'}) \rangle}+\frac{C(\kappa)}{\ell}}_\text{(1)}.
\end{aligned}
$$
Assume that $\omega \in (\mathcal{L}_{\ell}^{\mathcal{D}})^{\Z}$ such that $I'=J_1 I J_2$ is the zero-th symbol of $\omega$, where  $I \in \mathcal{L}_n$ and $|J_i| \leq K_0$ (see Corollary \ref{dominated-one-step-cocycle}). Thus,  for any $i \in \{2, \ldots, d\}$,
\begin{gather}
 \sigma_{i}(\mathcal{B}_{I'})= \sigma_{i}(\mathcal{B}(\omega))=\frac{\|\mathcal{B}^{\wedge i}(\omega)\|}{\|\mathcal{B}^{\wedge i-1}(\omega)\|}\stepcounter{equation}\tag{\theequation}\label{myeq2}\\
   \hspace{3cm} =\frac{\|\A^{\wedge i} _{J_1 I J_2}\|}{\|\A^{\wedge i-1} _{J_1 I J_2}\|}.
\end{gather}

By Corollary \ref{dominated-one-step-cocycle}, there are $K_1, K_2 \in \N$ such that for any $t\in \{1, \ldots, d\},$
\begin{equation}\label{upper-bound-B}
\|\A^{\wedge t}_{J_1 I J_2}\| \leq K_1 K_2 \|\A^{\wedge t}_{I}\|.
\end{equation}

Similarly, by Corollary \ref{dominated-one-step-cocycle}, there are $K_3, K_4 \in \N$ such that for any $t\in \{1, \ldots, d\},$
\begin{equation}\label{lower-bound-B}
\|\A^{\wedge t}_{J_1 I J_2}\| \geq K_3 K_4 \|\A^{\wedge t}_{I}\|.
\end{equation}

Then, by \eqref{myeq2}, \eqref{upper-bound-B}, and \eqref{lower-bound-B}, 
\[
(1)  \geq \frac{n}{\ell}\frac{1}{n} \log \sum_{I\in \mathcal{L}_{n}} \psi^{q}(\mathcal{A}_{I})+\frac{C(\kappa)}{\ell}+\frac{C'}{\ell},\]
where $C':=C'(K_0, K_1, K_2, K_3, K_4).$ Then, we see that  
\[\lim _{\ell \rightarrow \infty} \frac{1}{\ell} P_{\ell, \mathcal{D}}(\langle q, \Psi(\mathcal{B}) \rangle ) \geq P(\log \psi^{q}(\mathcal{A})),\]
 by taking $\ell \rightarrow \infty$, which implies $n \rightarrow \infty$.

 Now, we will prove the other side. Since
 $$
\begin{aligned}
 P(\log \psi^q(\mathcal{A}))= & \lim _{m \rightarrow \infty} \frac{1}{m} \log \sum_{I \in \mathcal{L}_{m}}   \psi^{q}(\mathcal{A}_{I})\\
 & = \lim _{k \rightarrow \infty} \frac{1}{(n+2K_0) k} \log \sum_{I_1, \ldots, I_k \in \mathcal{L}_{n+2K_0}}    \psi^{q}(\mathcal{A}_{I_1 \ldots I_k})\\
 & \geq  \lim _{k \rightarrow \infty} \frac{\ell}{(n+2K_0) k}\frac{1}{\ell} \log \sum_{I_1, \ldots, I_k \in \mathcal{L}_{\ell}^{\mathcal{D}}}   \psi^{q}(\mathcal{A}_{I_1 \ldots I_k})\\
& =  \lim _{k \rightarrow \infty} \frac{\ell}{(n+2K_0) k}\frac{1}{\ell} \log \sum_{I_1, \ldots, I_k \in \mathcal{L}_{\ell}^{\mathcal{D}}} e^{\langle q,\Psi(\mathcal{B}_{I_1 \ldots I_k}) \rangle},
 \end{aligned}
 $$

 The statement follows by taking $\ell \to \infty.$
\end{proof}

\section{Multifractal formalism for almost additive potentials}

For $i=1, \ldots, d$, let $\Phi_{i}:=\{\log \phi_{n}^{i}\}_{n=1}^{\infty}$ be an almost additive potential over a topologically mixing subshift of finite type $(\Sigma, T)$. We denote $\bold{\Phi}:=\left(\Phi_{1}, \ldots, \Phi_{d}\right)$.  Let $\mu \in \mathcal{M}(\Sigma, T)$ . We denote 
\[\chi(\mu, \bold{\Phi}):=\left(\chi(\mu, \Phi_{1}), \ldots, \chi(\mu, \Phi_{d})\right).\]

We say that $\bold{\Phi}$ has bounded distortion property:  There is $C>0$ such that for any $n\in \N$ and $I \in \mathcal{L}_{n}$, we have
\begin{equation}\label{bounded-dist}
C^{-1} \leq\frac{\phi_{n}^{i}(x)}{\phi_{n}^{i}(y)}\leq C
\end{equation} 
for all $x, y \in [I]$ and $i\in \{1, \ldots, d\}.$

\begin{thm}\label{topological pressure_almost additive}
Let $(\Sigma, T)$ be a topologically mixing subshift of finite type. Denote $\bold{\Phi}:=\left(\Phi_{1}, \ldots, \Phi_{d} \right)$, where $\Phi_{i}:=\{\log \phi_{n}^{i}\}_{n=1}^{\infty}$ is an almost additive potential and $\bold{\Phi}$  has bounded distortion property. Then,  for any $q\in \R^d,$ there is a unique ergodic Gibbs equilibrium measure $\mu_{q}$ for $\langle q, \bold{\Phi} \rangle $ such that 
 \[D_{q}P(\langle q, \bold{\Phi} \rangle)=\chi(\mu_{q}, \bold{\Phi}).\]

\end{thm}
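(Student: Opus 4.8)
The plan is to derive Theorem~\ref{topological pressure_almost additive} from the differentiability of the pressure function $q \mapsto P(\langle q, \bold{\Phi}\rangle)$ together with the existence and uniqueness of a Gibbs equilibrium state for each almost additive potential with bounded distortion. First I would recall the relevant thermodynamic formalism for almost additive potentials on topologically mixing subshifts of finite type: by the work of Barreira (and Feng--Huang, Mummert), for an almost additive potential $\Psi$ with the bounded distortion property there exists a unique equilibrium state $\mu_\Psi$ which is moreover the unique Gibbs measure for $\Psi$, i.e.\ there is $C\geq 1$ with
\[
C^{-1} \leq \frac{\mu_\Psi([x]_n)}{\exp(-nP(\Psi) + (S_n\text{-analog of }\Psi)(x))} \leq C
\]
for all $x$ and $n$. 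The key point is that a nonnegative (in fact arbitrary) linear combination $\langle q, \bold{\Phi}\rangle = \sum_i q_i \Phi_i$ of almost additive potentials with bounded distortion is again almost additive with bounded distortion (the constants combine multiplicatively, using $|q_i|$ in the exponents), so the above applies to $\langle q, \bold{\Phi}\rangle$ for every $q \in \R^d$, giving the unique ergodic Gibbs equilibrium measure $\mu_q$.

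Next I would establish differentiability of $q \mapsto P(\langle q,\bold\Phi\rangle)$ and compute the gradient. The standard route is: the pressure is convex in $q$ (immediate from H\"older's inequality applied to the defining sums $\sum_{I\in\mathcal L_n}\prod_i \phi_n^i(x_I)^{q_i}$), hence differentiable off a set of measure zero, but to get differentiability everywhere one uses that the equilibrium state is unique. Concretely, for any tangent vector $v\in\R^d$ and $t\in\R$, convexity gives that
\[
t \longmapsto P(\langle q + tv, \bold\Phi\rangle)
\]
is convex, and its one-sided derivatives at $t=0$ are sandwiched between $\inf_\mu$ and $\sup_\mu$ of $\langle v, \chi(\mu,\bold\Phi)\rangle$ over equilibrium states $\mu$ of $\langle q,\bold\Phi\rangle$ (this is the classical argument: differentiate the variational principle $P(\langle q,\bold\Phi\rangle) = \sup_\mu\{h_\mu(T) + \langle q, \chi(\mu,\bold\Phi)\rangle\}$). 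Since the equilibrium state $\mu_q$ is unique, both one-sided derivatives equal $\langle v, \chi(\mu_q,\bold\Phi)\rangle$; as $v$ is arbitrary this yields $D_q P(\langle q,\bold\Phi\rangle) = \chi(\mu_q,\bold\Phi)$. One subtlety to check here is that $\chi(\mu,\bold\Phi)$ is well-defined and the map $\mu\mapsto\chi(\mu,\bold\Phi)$ is affine and upper semi-continuous on $\mathcal M(\Sigma,T)$, which follows from almost additivity (Kingman, plus the sub/super-additive sandwich) and compactness, and that the entropy map is upper semi-continuous on an SFT so that equilibrium states exist.

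I expect the main obstacle to be the rigorous passage from ``convexity plus uniqueness of the equilibrium state'' to ``differentiability with the stated gradient,'' i.e.\ verifying carefully that the set of equilibrium states of $\langle q, \bold\Phi\rangle$ is exactly $\{\mu_q\}$ and that the derivative of the pressure along any line is pinched between the $\chi$-values of equilibrium states. For additive potentials this is textbook (Ruelle, Walters), but in the almost additive setting one must invoke the appropriate analogue — this is precisely where the bounded distortion hypothesis is used, as it is what guarantees the Gibbs property and hence uniqueness (via, e.g., \cite{barreira_gelfert} or the Feng--Huang framework already cited in the paper). A clean way to organize the write-up is: (i) cite existence/uniqueness of the Gibbs equilibrium state $\mu_q$ for almost additive potentials with bounded distortion; (ii) note $\langle q,\bold\Phi\rangle$ stays in this class; (iii) prove convexity of the pressure; (iv) use the variational principle and uniqueness of $\mu_q$ to show each directional derivative exists and equals $\langle\cdot,\chi(\mu_q,\bold\Phi)\rangle$; (v) conclude $D_qP(\langle q,\bold\Phi\rangle)=\chi(\mu_q,\bold\Phi)$. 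Routine estimates (combining bounded-distortion constants, affineness of $\chi$) I would relegate to one-line remarks.
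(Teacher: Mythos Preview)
Your proposal is correct and is essentially an unpacking of the very results the paper invokes: the paper's own proof consists of a single sentence citing \cite[Theorem 3.3]{FH} and \cite[Theorem 10.1.9]{Bar}, which together give exactly the existence/uniqueness of the Gibbs equilibrium state for almost additive potentials with bounded distortion and the differentiability of the pressure with gradient $\chi(\mu_q,\bold\Phi)$. Your outline (closure of the class under linear combinations, convexity, uniqueness $\Rightarrow$ differentiability via the variational principle) is precisely the content of those references, so there is no genuine divergence in approach---you are simply writing out what the paper leaves to citation.
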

\begin{proof}
This follows from the combination of \cite[Theorem 3.3]{FH} and \cite[Theorem 10.1.9]{Bar}.
\end{proof}

\begin{thm}\label{convergence-almost-additive}
Suppose $\{\nu_{n}\}_{n=1}^{\infty}$ is a sequence in $\mathcal{M}(X)$ and $\Phi=\{\log \phi_{n}\}_{n=1}^{\infty}$ is an almost additive potential over a topological dynamical system $(X,T).$  We form the new sequence $\{\mu_{n}\}_{n=1}^{\infty}$ by $\mu_{n}=\frac{1}{n}\sum_{i=0}^{n-1}\nu_{n}oT^{i}$. Assume that $\mu_{n_{i}}$ converges to $\mu$ in $\mathcal{M}(X)$ for some subsequence $\{n_i\}$ of natural numbers.  Then $\mu$ is an $T$-invariant measure and 
\[
\lim_{i \rightarrow \infty} \frac{1}{n_{i}}\int \log \phi_{n_{i}}(x)d\nu_{n_i}(x)= \chi (\mu, \Phi).
\]

\end{thm}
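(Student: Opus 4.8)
The plan is to reduce the statement to two familiar ingredients: the characterization of the Birkhoff average via almost additive potentials, and the weak-$*$ convergence machinery for averaged measures. First I would recall that for an almost additive potential $\Phi=\{\log\phi_n\}_{n=1}^\infty$, the subadditivity-type estimates give a constant $C>0$ with
\[
C^{-1}\phi_m(x)\phi_n(T^m(x))\le \phi_{m+n}(x)\le C\,\phi_m(x)\phi_n(T^m(x))
\]
for all $x\in X$ and $m,n\in\N$. From this one extracts the standard consequence that the sequence $a_n:=\log\phi_n$ is almost additive in the sense that $|a_{m+n}(x)-a_m(x)-a_n(T^m(x))|\le \log C$, and hence the ``defect'' in additivity along any orbit grows at most linearly in the number of breakpoints. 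The key quantitative fact I would isolate is: for any $n$ and any $x$, writing $n=qn_i+r$ with $0\le r<n_i$ (or, symmetrically, chopping an orbit of length $n_i$ into blocks of a fixed length $\ell$), one has
\[
\Big|\,\log\phi_{n_i}(x)-\sum_{j=0}^{k-1}\log\phi_\ell(T^{j\ell}(x))-\log\phi_{r}(T^{k\ell}(x))\Big|\le k\log C,
\]
so dividing by $n_i$ the error is $O(\log C/\ell)$, uniformly in $x$.

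Next I would set up the measures. With $\nu_{n}$ given and $\mu_n=\frac1n\sum_{i=0}^{n-1}\nu_n\circ T^i$, invariance of any weak-$*$ limit $\mu$ of $\mu_{n_i}$ is the classical Krylov--Bogolyubov argument: for continuous $g$, $|\int g\circ T\,d\mu_n-\int g\,d\mu_n|\le \tfrac2n\|g\|_\infty\to0$, so $\int g\circ T\,d\mu=\int g\,d\mu$, giving $\mu\in\mathcal M(X,T)$. For the limit of the integrals, the idea is: fix a large $\ell$, apply the block decomposition above inside the integral $\frac1{n_i}\int\log\phi_{n_i}\,d\nu_{n_i}$, use that $\int \frac1{n_i}\sum_{j=0}^{n_i-1}(\log\phi_\ell\circ T^{j})\,d\nu_{n_i}=\int \log\phi_\ell\,d\mu_{n_i}$ by the very definition of $\mu_{n_i}$ (after reindexing the Birkhoff sum of $\log\phi_\ell$ in blocks of length $\ell$, which costs another $O(\ell/n_i)$ boundary error since $\log\phi_\ell$ is a fixed continuous function), and then let $n_i\to\infty$ to get $\frac1\ell\int\log\phi_\ell\,d\mu$ up to error $O(\log C/\ell)$. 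Finally let $\ell\to\infty$; by almost additivity the sequence $\frac1\ell\int\log\phi_\ell\,d\mu$ converges to $\chi(\mu,\Phi)$ (this is exactly the definition of $\chi(\mu,\Phi)$ and its existence for almost additive potentials, or one can quote Kingman applied to the subadditive envelope $\log\phi_n+\log C$). Combining the two limits yields
\[
\lim_{i\to\infty}\frac1{n_i}\int\log\phi_{n_i}\,d\nu_{n_i}=\chi(\mu,\Phi).
\]

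I expect the main obstacle to be bookkeeping the two sources of boundary error simultaneously and in the right order: the ``$k\log C$'' almost-additivity defect, which forces $\ell\to\infty$, and the ``$O(\ell/n_i)$'' error from rearranging the length-$\ell$ Birkhoff sum into blocks and from the leftover segment of length $<\ell$, which forces $n_i\to\infty$ \emph{before} $\ell\to\infty$. The clean way is a double limit: for every fixed $\ell$, show $\limsup_i$ and $\liminf_i$ of $\frac1{n_i}\int\log\phi_{n_i}\,d\nu_{n_i}$ both lie within $O(\log C/\ell)$ of $\frac1\ell\int\log\phi_\ell\,d\mu$; then send $\ell\to\infty$. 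One subtlety to flag is that $\log\phi_1$ need not be bounded a priori — but each $\phi_\ell$ is continuous on the compact $X$, hence $\log\phi_\ell$ is bounded, so all integrals above are finite and the weak-$*$ passage $\int\log\phi_\ell\,d\mu_{n_i}\to\int\log\phi_\ell\,d\mu$ is legitimate for each fixed $\ell$. No hyperbolicity or typicality is used; the statement holds for any TDS, which is consistent with its intended role as a soft compactness tool later in the paper.
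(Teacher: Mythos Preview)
Your outline is correct and is precisely the standard argument for this kind of statement; the paper itself does not give a proof but simply cites \cite[Lemma A.4]{FH}, whose proof follows exactly the route you describe (block decomposition via almost additivity, averaging over the $\ell$ possible offsets to convert the block sum $\sum_{j}\log\phi_\ell\circ T^{j\ell}$ into the full Birkhoff sum $\tfrac{1}{\ell}\sum_{m=0}^{n_i-1}\log\phi_\ell\circ T^m$, weak-$*$ passage using continuity of $\log\phi_\ell$, then $\ell\to\infty$). The only step you leave slightly implicit is that ``reindexing'' really does require the offset-averaging trick---a single block decomposition with offset $0$ samples $\log\phi_\ell$ only along the sparse subsequence $0,\ell,2\ell,\ldots$ and cannot be directly compared to $\int\log\phi_\ell\,d\mu_{n_i}$---but you clearly have this in mind, and your analysis of the two error terms and the order of limits is exactly right.
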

\begin{proof}
It follows from \cite[Lemma A.4]{FH}.
\end{proof}

Let a matrix cocycle $\mathcal{A} :\Sigma \to GL(d, \R)$  over a topologically mixing subshift of finite type $(\Sigma, T)$  be dominated.  We recall that
\[
\Phi_{\mathcal{A}}= \left(\log \sigma_{1}(\mathcal{A}), \ldots, \log \sigma_{d}(\mathcal{A})\right).
\]
By Proposition \ref{prop:add}, for each $i \in \{1, \ldots, d\}$, $\{\log \sigma_{i}(\mathcal{A}^n)\}_{n=1}^{\infty}$ is almost additive. We also say that  $\Phi_{\mathcal{A}}$ has bounded distortion property if $\{\log \sigma_i(\mathcal{A}^n)\}_{n=1}^{\infty}$ satisfies \eqref{bounded-dist} for all $i\in \{1, \ldots, d\}.$

For $\vec{\alpha}\in \R^{d}$, we recall the level set
\[E(\vec{\alpha})=\bigg\{ x\in \Sigma: \lim_{n \to \infty}\frac{1}{n}\log \sigma_{i}(\mathcal{A}^{n}(x))=\alpha_i \text{ for }i=1,2, \ldots, d \bigg\}.\]

We also recall the Lyapunov spectrum

\[\vec{L}=\bigg\{\vec{\alpha} \in \R^{d}: \exists x \in \Sigma \text{ such that } \lim_{n\to \infty} \frac{1}{n}\log \sigma_{i}(\mathcal{A}^{n}(x))=\alpha_i \text{ for }i=1,2, \ldots, d \bigg\}.\]

\begin{thm}\label{lower-bound} Assume that the matrix cocycle $\mathcal{A}:\Sigma \to GL(d, \R)$ over a topologically mixing subshift of finite type $(\Sigma, T)$ is dominated and $\Phi_{\mathcal{A}}$ has bounded distortion property. If $\alpha \in \mathring{\vec{L}}$, then
\[h_{\mathrm{top}}(E(\vec{\alpha}))\geq \inf _{q \in \mathbb{R}^{d}}\left\{P\left(\langle q , \Phi_{\mathcal{A}}-\vec{\alpha} \rangle \right) \right\} .\]
\end{thm}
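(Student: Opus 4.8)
The plan is to realize the lower bound by constructing, for a prescribed $\vec\alpha$ in the interior of $\vec L$, an invariant measure supported on $E(\vec\alpha)$ whose entropy is at least the claimed infimum, and then invoke the variational-principle philosophy via Bowen's definition of topological entropy. First I would use the hypothesis that $\mathcal{A}$ is dominated together with Proposition~\ref{prop:add} to conclude that $\Phi_{\mathcal{A}}=(\log\sigma_1(\mathcal{A}),\dots,\log\sigma_d(\mathcal{A}))$ is a $d$-tuple of almost additive potentials with bounded distortion, so that Theorem~\ref{topological pressure_almost additive} applies: for each $q\in\R^d$ there is a unique ergodic Gibbs equilibrium state $\mu_q$ for $\langle q,\Phi_{\mathcal{A}}\rangle$ with $D_q P(\langle q,\Phi_{\mathcal{A}}\rangle)=\chi(\mu_q,\Phi_{\mathcal{A}})$. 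The function $q\mapsto P(\langle q,\Phi_{\mathcal{A}}\rangle)$ is convex and differentiable, and since $\vec\alpha\in\mathring{\vec L}$ one expects $\vec\alpha$ to lie in the interior of the image of the gradient map; I would therefore fix $q=q(\vec\alpha)$ with $\chi(\mu_q,\Phi_{\mathcal{A}})=\vec\alpha$ (treating the boundary/non-attainment case by an approximation argument, choosing $q_k$ with $\chi(\mu_{q_k},\Phi_{\mathcal{A}})\to\vec\alpha$ and passing to a limit at the end).

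Next I would show $\mu_q(E(\vec\alpha))=1$: by the subadditive (here almost additive) ergodic theorem applied to each coordinate $\{\log\sigma_i(\mathcal{A}^n)\}_n$, for $\mu_q$-a.e.\ $x$ one has $\frac1n\log\sigma_i(\mathcal{A}^n(x))\to\chi(\mu_q,\Phi_i)=\alpha_i$, so $E(\vec\alpha)$ carries the measure $\mu_q$. Then I would estimate $h_{\mathrm{top}}(E(\vec\alpha))$ from below by $h_{\mu_q}(T)$. The cleanest route is the Billingsley-type / mass distribution lower bound: since $\mu_q$ is a Gibbs measure for $\langle q,\Phi_{\mathcal{A}}\rangle$ with bounded distortion, one has uniform control $\mu_q([x]_n)\asymp e^{-nP(\langle q,\Phi_{\mathcal{A}}\rangle)}\psi^q(\mathcal{A}^n(x))$, and for $x\in E(\vec\alpha)$ the right-hand side behaves like $e^{-n(P(\langle q,\Phi_{\mathcal{A}}\rangle)-\langle q,\vec\alpha\rangle)+o(n)}$; hence on the full-measure subset of $E(\vec\alpha)$ the local entropy $\liminf_n -\frac1n\log\mu_q([x]_n)$ equals $P(\langle q,\Phi_{\mathcal{A}}\rangle)-\langle q,\vec\alpha\rangle = P(\langle q,\Phi_{\mathcal{A}}-\vec\alpha\rangle)$, and the standard covering argument (every cover of $E(\vec\alpha)$ by Bowen balls, i.e.\ cylinders, must have total $\mu_q$-mass $\ge 1$ on this subset) gives $h_{\mathrm{top}}(E(\vec\alpha))\ge P(\langle q,\Phi_{\mathcal{A}}-\vec\alpha\rangle)$. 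Since $P(\langle q,\Phi_{\mathcal{A}}-\vec\alpha\rangle)\ge \inf_{t\in\R^d}P(\langle t,\Phi_{\mathcal{A}}-\vec\alpha\rangle)$ trivially, and in fact the chosen $q$ is the minimizer (the derivative of $t\mapsto P(\langle t,\Phi_{\mathcal{A}}\rangle)-\langle t,\vec\alpha\rangle$ vanishes at $q$ by the Gibbs identity), one obtains exactly $h_{\mathrm{top}}(E(\vec\alpha))\ge\inf_{q\in\R^d}P(\langle q,\Phi_{\mathcal{A}}-\vec\alpha\rangle)$.

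Alternatively, if one prefers to avoid the fine Gibbs estimates, I would instead build a Moran-type subset of $E(\vec\alpha)$ by concatenating long orbit segments that are generic for $\mu_q$ (using Theorem~\ref{convergence-almost-additive} to keep the almost additive averages close to $\chi(\mu_q,\bold\Phi)=\vec\alpha$ along the construction), and compute its entropy directly; this is the more robust but more laborious route and would be my fallback. The main obstacle I anticipate is the boundary behavior: ensuring that for $\vec\alpha\in\mathring{\vec L}$ the gradient map $q\mapsto\chi(\mu_q,\Phi_{\mathcal{A}})$ actually surjects onto a neighborhood of $\vec\alpha$ (equivalently, that the pressure function is steep enough), and handling $\vec\alpha$ that are limits of attained gradients by an upper-semicontinuity/approximation argument for $q\mapsto P(\langle q,\Phi_{\mathcal{A}}-\vec\alpha\rangle)$ and for the entropy map. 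The domination hypothesis, via almost additivity and bounded distortion, is precisely what makes the pressure function smooth enough for this Legendre-duality argument to close.
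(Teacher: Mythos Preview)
Your proposal is correct and follows essentially the same strategy as the paper: produce an ergodic equilibrium state $\mu_q$ with $\chi(\mu_q,\Phi_{\mathcal{A}})=\vec\alpha$, deduce $\mu_q(E(\vec\alpha))=1$ by the ergodic theorem, and invoke $h_{\mathrm{top}}(E(\vec\alpha))\geq h_{\mu_q}(T)$.

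The one place where the paper is sharper is precisely the obstacle you flag at the end. Rather than arguing surjectivity of the gradient map or passing to a limit through approximating $q_k$ (which is delicate, since $\mu_{q_k}$ sits on $E(\chi(\mu_{q_k}))$, not on $E(\vec\alpha)$), the paper shows directly that $F(q):=P(\langle q,\Phi_{\mathcal{A}}-\vec\alpha\rangle)$ is coercive: if $r=\operatorname{dist}(\vec\alpha,\R^d\setminus\vec L)>0$, then for each $q$ the point $\beta$ with $\beta_i=\alpha_i+\tfrac{r}{2d}\operatorname{sgn}q_i$ lies in $\vec L$, and applying the variational principle with a measure whose Lyapunov vector is $\beta$ (obtained via Theorem~\ref{convergence-almost-additive} from a point in $E(\beta)$) gives $F(q)\geq\langle q,\beta-\vec\alpha\rangle=\tfrac{r}{2d}\|q\|_1$. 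Hence $F$ attains its infimum at some finite $q(\vec\alpha)$, where $DF(q(\vec\alpha))=0$ forces $\chi(\mu_{q(\vec\alpha)},\Phi_{\mathcal{A}})=\vec\alpha$ by Theorem~\ref{topological pressure_almost additive}; no approximation is needed. Your Gibbs/local-entropy computation is valid but unnecessary once you have $\mu_q(E(\vec\alpha))=1$: the paper simply invokes the standard Bowen inequality $h_{\mathrm{top}}(E(\vec\alpha))\geq h_{\mu_q}(T)$, and $h_{\mu_q}(T)=F(q(\vec\alpha))=\min_q F(q)$ since $\mu_q$ is the equilibrium state and $\chi(\mu_q,\Phi_{\mathcal{A}})=\vec\alpha$.
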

\begin{proof}
The proof is similar to \cite[Lemma 12.1.6]{Bar}. We give a proof here for the convenience of the readers.

 Let $r>0$ be the distance of $\vec{\alpha}$ to $\mathbb{R}^{d} \backslash \vec{L}$. Take $q \in \mathbb{R}^{d}$ and define:
\[ F(q):=P\left(\langle q, \Phi_{\mathcal{A}} -\vec{\alpha}  \rangle \right).\]
Given $\beta=\left(\beta_{1}, \ldots, \beta_{d}\right) \in \mathbb{R}^{d}$ with $\beta_{i}=\alpha_{i}+r \operatorname{sgn} q_{i} /(2 d)$ for each $i$, we have:
$$
\|\beta-\vec{\alpha}\|=\sum_{i=1}^{d}\left|\beta_{i}-\alpha_{i}\right|=\sum_{i=1}^{d}\left|\frac{1}{2 d} r \operatorname{sgn} q_{i}\right|=\frac{r}{2}<r
$$
and hence $\beta \in \vec{L}$. Note that $\{\log \sigma_i (\mathcal{A}^n)\}_{n=1}^{\infty}$ is almost additive by Proposition \ref{prop:add}. Take $x \in E(\beta)$. Denote $\mu_{x, n}:=\frac{1}{n} \sum_{i=1}^{n-1} \delta_{T^{i}}{ }_{x}$. Then there exists $n_{j} \uparrow \infty$ so that $\mu_{x, n_{j}} \rightarrow \mu$ for some $\mu \in \mathcal{M}(\Sigma, T)$. Apply Theorem \ref{convergence-almost-additive} (in which we take $\nu_{n}=\delta_{x}$ ) to obtain, 
$$
\lim _{j \rightarrow \infty} \frac{1}{n_j} \log \sigma_{i}(\mathcal{A}^{n_j}(x))=\lim _{n \rightarrow \infty} \frac{1}{n} \int \log \sigma_{i}(\mathcal{A}^{n}(x)) d \mu.
$$

Since $x \in E(\beta),$ 
$$
\lim _{n \rightarrow \infty} \frac{1}{n} \int \log \sigma_{i}(\mathcal{A}^{n}(x)) d \mu=\beta_i.
$$


Denote $\Phi_{n}:=\Phi_{\mathcal{A}^n}.$ By variational principle (see \eqref{varitional}), we have
\[
F(q) \geqslant h_{\mu}(T)+\lim_{n \rightarrow \infty} \frac{1}{n}\left\langle q, \int \left(\Phi_{n}-n \vec{\alpha}  \right) d \mu \right\rangle.
\]
Since
$$
\begin{aligned}
\left\langle q, \int (\beta-\vec{\alpha}) d \mu\right\rangle &=\sum_{i=1}^{d} q_{i} \int \left(\beta_{i}-\alpha_{i}\right) d \mu=\sum_{i=1}^{d} \frac{1}{2 d} r q_{i} \operatorname{sgn} q_{i},\\
\end{aligned}
$$
we have:
$$
\frac{1}{n}\left\langle q, \int \left(\Phi_{n}-n\vec{\alpha} \right) d \mu\right\rangle=\frac{1}{2 d} r \sum_{i=1}^{d}\left|q_{i}\right| +\frac{1}{n}\left\langle q, \int \left(\Phi_{n}-n\beta \right) d \mu\right\rangle .
$$
Taking the limit when $n \rightarrow \infty$, we obtain:
\[ \lim_{n\to \infty} \frac{1}{n}\left\langle q, \int \left(\Phi_{n}-n\vec{\alpha} \right) d \mu\right\rangle \geq \frac{1}{2 d} r \|q\|.\]

Since $h_{\mu}(T) \geq 0,$ 
\begin{equation}\label{One-side-pressure}
F(q) \geq \frac{1}{2 d} r \|q\|.
\end{equation}
We note that the right-hand side of inequality \eqref{One-side-pressure} takes arbitrarily large values for $\|q\|$ sufficiently large. Thus, there exists $R \in \mathbb{R}$ such that $F(q) \geqslant F(0)$ for every $q \in \mathbb{R}^{d}$ with $\|q\| \geqslant R$.

Since $F$ is differentiable by Theorem \ref{topological pressure_almost additive}, it attains a minimum at some point $q=q(\vec{\alpha})$ with $\|q(\vec{\alpha})\| \leqslant R$, thus satisfying $D F(q(\vec{\alpha}))=0$. By Theorem \ref{topological pressure_almost additive}, there is an ergodic Gibbs equilibrium measure $\mu_{q(\vec{\alpha})}$ of the sequence of functions
$$
\langle q(\vec{\alpha}), \Phi_{n}-\vec{\alpha} \rangle.
$$

Moreover, by Theorem \ref{topological pressure_almost additive}, \begin{equation}\label{LE-equ}
 D F(q(\vec{\alpha}))=\lim_{n\to \infty}\frac{1}{n} \int (\Phi_{n}-n\vec{\alpha} ) d\mu_{q(\vec{\alpha})}=\vec{0}.
\end{equation}

Taking into account the above observations, 
 \[\min _{q \in \mathbb{R}^{d}} \{P\left(\langle q, \Phi_{\mathcal{A}}-\vec{\alpha} \rangle \right)\}=F(q(\vec{\alpha}))= h_{\mu_{q(\vec{\alpha})}}(T).\]

Note that by  \eqref{LE-equ}, $\mu_{q(\vec{\alpha})}(E(\vec{\alpha}))=1.$ Therefore,
\[h_{\mathrm{top}}(E(\vec{\alpha}))\geq h_{\mu_{q(\vec{\alpha})}}(T) \geq \inf_{q \in \mathbb{R}^{d}} \{P\left(\langle q, \Phi_{\mathcal{A}}-\vec{\alpha} \rangle \right)\} .\]

 \end{proof}

\begin{thm}\label{dominated_case}
 Assume that $\left(A_1, \ldots, A_k\right) \in GL(d, \R)^k$ generates a one-step cocycle $\mathcal{A}:\Sigma \to GL(d, \R).$ Suppose that the one-step cocycle $\mathcal{A}:\Sigma \to GL(d, \R)$ is dominated. Then,
\[h_{\mathrm{top}}\left( E(\vec{\alpha})\right) =  \inf _{q \in \mathbb{R}^{d}}\left\{P\left(\langle q, \Phi_{\mathcal{A}} \rangle\right)- \langle q , \vec{\alpha} \rangle \right\}
\]
for all $\vec{\alpha} \in \mathring{\vec{L}}$.

\end{thm}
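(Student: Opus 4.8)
The plan is to combine the upper bound already obtained in Theorem~\ref{HD-one side} with the lower bound of Theorem~\ref{lower-bound}. Since $\mathcal{A}$ is a dominated one-step cocycle, domination implies that $\varphi^{s}(\mathcal{A}^{n}(x))=\varphi^{s}(\mathcal{A}^{n}(y))$ for $x,y$ in the same cylinder (the singular value function of a one-step cocycle is constant on cylinders), so $\Phi_{\mathcal{A}}$ automatically has the bounded distortion property; moreover a dominated cocycle is certainly simultaneously quasi-multiplicative (domination with index $i$ for all $i$ makes $\|\mathcal{A}^{\wedge i}_{IJ}\|$ comparable to $\|\mathcal{A}^{\wedge i}_{I}\|\,\|\mathcal{A}^{\wedge i}_{J}\|$ by Proposition~\ref{prop:add}, with no connecting word needed). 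Hence both hypotheses needed to invoke the two one-sided bounds are in force.

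First I would apply Theorem~\ref{HD-one side} to get, for all $\vec{\alpha}\in\mathring{\vec{L}}$,
\[
h_{\mathrm{top}}(E(\vec{\alpha})) \le \inf_{q\in\R^{d}}\left\{P(\log\psi^{q}(\mathcal{A})) - \langle q,\vec{\alpha}\rangle\right\}
= \inf_{q\in\R^{d}}\left\{P(\langle q,\Phi_{\mathcal{A}}\rangle) - \langle q,\vec{\alpha}\rangle\right\},
\]
where I use the identity $\langle q,\Phi_{\mathcal{A}}\rangle = \log\psi^{q}(\mathcal{A})$ recorded just before Theorem~\ref{HD-one side} and the fact (Lemma~\ref{topological_pressure}) that the $\limsup$ defining the pressure is a genuine limit here. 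Next I would apply Theorem~\ref{lower-bound}: since $\mathcal{A}$ is dominated and $\Phi_{\mathcal{A}}$ has bounded distortion, for $\vec{\alpha}\in\mathring{\vec{L}}$ we have
\[
h_{\mathrm{top}}(E(\vec{\alpha})) \ge \inf_{q\in\R^{d}}\left\{P(\langle q,\Phi_{\mathcal{A}}-\vec{\alpha}\rangle)\right\}.
\]
Finally I would observe that the two infima agree: because $\{\log\sigma_{i}(\mathcal{A}^{n})\}_{n}$ is almost additive (Proposition~\ref{prop:add}), translating the potential by the constant vector $\vec{\alpha}$ shifts the pressure additively, i.e.\ $P(\langle q,\Phi_{\mathcal{A}}-\vec{\alpha}\rangle) = P(\langle q,\Phi_{\mathcal{A}}\rangle) - \langle q,\vec{\alpha}\rangle$ for every $q$, since $\chi(\mu,\{-n\langle q,\vec{\alpha}\rangle\}) = -\langle q,\vec{\alpha}\rangle$ for every invariant $\mu$ in the variational principle \eqref{varitional}. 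Taking the infimum over $q$ of this identity turns the lower bound into exactly $\inf_{q}\{P(\langle q,\Phi_{\mathcal{A}}\rangle) - \langle q,\vec{\alpha}\rangle\}$, matching the upper bound, and the theorem follows.

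The only genuinely substantive points are bookkeeping: verifying that a dominated one-step cocycle meets the hypothesis of Theorem~\ref{HD-one side} (simultaneous quasi-multiplicativity, which follows from Proposition~\ref{prop:add} applied to each exterior power, or alternatively from domination being an open condition checkable on cylinders as in \cite{BGO}) and meets the bounded-distortion hypothesis of Theorem~\ref{lower-bound} (immediate since singular values of one-step cocycles are cylinder-constant). I expect no real obstacle here; the work has effectively been front-loaded into Theorems~\ref{HD-one side} and~\ref{lower-bound}, and this theorem is their formal synthesis. One should, however, be slightly careful that both one-sided bounds are stated for $\vec{\alpha}$ in the \emph{interior} $\mathring{\vec{L}}$ of the Lyapunov spectrum, so the conclusion is likewise only claimed on $\mathring{\vec{L}}$, which is exactly what is asserted.
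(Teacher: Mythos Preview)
Your proposal is correct and follows essentially the same approach as the paper: the paper's proof consists of exactly the two sentences ``The lower bound follows from Theorem~\ref{lower-bound} and the upper bound follows from Theorem~\ref{HD-one side}.'' Your additional verification that domination yields simultaneous quasi-multiplicativity (via Proposition~\ref{prop:add} on each exterior power) and that one-step cocycles trivially satisfy bounded distortion, together with the pressure-shift identity reconciling the two infima, is helpful bookkeeping that the paper leaves implicit.
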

\begin{proof}
The lower bound follows from Theorem \ref{lower-bound} and the upper bound follows from Theorem \ref{HD-one side}.
\end{proof}

\section{The proof of Theorem A}

Let $\mathcal{A}:\Sigma \to GL(d, \R)$ be a one-step typical cocycle. Then, we can construct a
dominated cocycle $\mathcal{B}: (\mathcal{L}_{\ell(n)}^{\mathcal{D}})^{\Z} \to GL(d, \R)$ as we explained in Section \ref{dominated_subsystem}.  By Proposition \ref{prop:add}, there is $C>0$ such that for all $n, k \in \N$ and $x \in (\mathcal{L}_{\ell(n)}^{\mathcal{D}})^{\Z}$,
\[\left|\log \sigma_{i}(\mathcal{B}^{k}(x))-S_{k}\log \sigma_{i}(\mathcal{B}(x)) \right|\leq C,\]
for $i \in \{1, \ldots, d\}$.

\begin{proof}[Proof of Theorem A]
We denote  
\[ E^{\ell, \mathcal{D}}(\vec{\alpha}):=\bigg\{x\in (\mathcal{L}_{\ell}^{\mathcal{D}})^{\Z}: \lim_{k\to \infty}\frac{1}{k}S_{k} \Psi(\mathcal{B}(x))= \vec{\alpha}\bigg\}.\]

It is easy to see that 

\begin{equation}\label{eq1}
\frac{1}{n+2K_0} h_{\mathrm{top}}(E^{\ell, \mathcal{D}}( \vec{\alpha}))\leq h_{\mathrm{top}}(E(\vec{\alpha})).
\end{equation}

We denote
$$
\begin{aligned}
&s_{0}(\vec{\alpha}):= \inf_{q\in \R^{d}}\{P(\log \psi^{q}(\mathcal{A}))- \langle q, \vec{\alpha} \rangle \},\\
&s_{\ell}(\vec{\alpha}):=\inf_{q\in \R^{d}}\{P_{\ell ,\mathcal{D}}(\langle q,  \Psi(\mathcal{B}) \rangle)- \langle q, \ell\vec{\alpha} \rangle\},
\end{aligned}
$$
where $\ell=\ell(n)\in [n, n+2K_0].$ By Theorem \ref{continuity_potential},
 \begin{equation}\label{continuity_transfor}
\frac{1}{\ell}s_{\ell}(\vec{\alpha})\to s_{0}(\vec{\alpha}).
\end{equation}

Note that by Corollary \ref{proof-upper-bound}, Theorem \ref{dominated_case} and \eqref{eq1},
$$
\begin{aligned}
\frac{1}{\ell}s_{\ell}(\vec{\alpha})&=\frac{n+2K_0}{n+2K_0}\frac{1}{\ell}h_{\mathrm{top}}(E^{\ell, \mathcal{D}}( \vec{\alpha}))\\
&\leq \frac{n+2K_0}{\ell} h_{\mathrm{top}}(E(\vec{\alpha}))\\
&\leq \frac{n+2K_0}{\ell} s_{0}(\vec{\alpha}).
\end{aligned}
$$
Therefore, by \eqref{continuity_transfor}, we see that
\[h_{\mathrm{top}}(E(\vec{\alpha}))=s_{0}(\vec{\alpha}),\]
when $\ell \to \infty.$
\end{proof}

\section*{\textbf{Declarations}}

\subsubsection*{\textbf{Conflict of interest}} The author declares that he has no conflict of interest.

\subsubsection*{\textbf{Funding}} This work was supported by the Knut and Alice Wallenberg Foundation.

\subsubsection*{\textbf{Ethical Approval}} This article does not contain any studies with human participants performed by any of the
authors.

\subsubsection*{\textbf{Data Availability}} Data sharing not applicable to this article.

\bibliographystyle{acm}
\bibliography{Entropy-spectrum}
\end{document}